 \newtheorem{thm}{Theorem}[section]
 \newtheorem*{thm*}{Theorem}
 \newtheorem{cor}[thm]{Corollary}
 \newtheorem{lem}[thm]{Lemma}
 \newtheorem{prop}[thm]{Proposition}
 \theoremstyle{definition}
 \theoremstyle{remark}
 \newtheorem{rem}[thm]{Remark}
 \newtheorem{ex}{Example}
 \newtheorem{problem}{Problem}
 \numberwithin{equation}{section}
\newcommand{\vertiii}[1]{{\left\vert\kern-0.25ex\left\vert\kern-0.25ex\left\vert #1
  \right\vert\kern-0.25ex\right\vert\kern-0.25ex\right\vert}}
\DeclareMathOperator{\bdo}{BDO}
\DeclareMathOperator{\band}{band}
\title{Rotationally-continuous operators on the Fock space}
\author{Robert Fulsche\footnote{fulsche@math.uni-hannover.de}}
\begin{document}
\maketitle
\begin{abstract}
    We study operators on the Fock space for which conjugation by the rotation operators induces a continuous action of the circle group. We prove that this class of operators can be identified with the space of band-dominated operators on $\ell^2(\mathbb N_0)$ by mapping the operators to their matrix representations with respect to the standard orthonormal basis. Further, we prove that the intersection of this class with the Toeplitz algebra of the Fock space agrees, in the same manner, with the band-dominated operators on $\ell^2(\mathbb N_0)$ such that all the diagonals of the matrix are sequences which are uniformly continuous with respect to the square-root metric.
\end{abstract}

\section{Introduction}

By $\mu_t$ we denote the Gaussian measure $d\mu_z(z) = (\pi t)^{-1} e^{-\frac{|z|^2}{t}}~dz$ on $\mathbb C$, where $t > 0$ is fixed. Then, the Fock space $F_t^2$ consists of all entire functions in $L^2(\mathbb C, \mu_t)$. The investigation of bounded linear operators on this space has now a long tradition and goes back at least to the works of Berger and Coburn \cite{Berger_Coburn1986, Berger_Coburn1987, Berger_Coburn1994}. Since then, the study of operators on the Fock space has been a recurring theme, see, e.g.,  \cite{Bauer_Isralowitz2012, Xia} for some of the most important results. A particular direction of research studies linear operators on the Fock space which are invariant with respect to certain group actions, as well as algebras generated by such operators \cite{Dewage_Olafsson2022, Esmeral_Maximenko2016, Esmeral_Vasilevski2016}. On the other hand, it was observed that another aspect of group actions plays an important role in the theory of Fock space operators: The space of operators which are continuous with respect to a certain action of the underlying phase space are a very well-behaved class and, somewhat surprisingly, this space agrees with the $C^\ast$-algebra generated by all Toeplitz operators with bounded symbols \cite[Theorem 3.1]{Fulsche2020}. More recently, the author together with Raffael Hagger investigated the operators which satisfy a weaker form of continuity \cite{Fulsche_Hagger2025}. More precisely, there they showed (up to unitary equivalence) that the operators on the Fock space which are continuous with respect to the above mentioned group action of the phase space, restricted to a Lagrangian subspace, agree with the class of band-dominated operators on $L^2(\mathbb R)$ in a certain way.

The present paper will continue this line of research. Here, we will consider those operators on which the group action of the circle $\mathbb T$, implemented by adjoining with the operators of rotation $U_\zeta g(z) = g(\zeta z)$, $g \in F_t^2$ and $\zeta \in \mathbb T$, acts strongly continuously. This space is a $C^\ast$-algebra which we denote by $C_R(F_t^2)$. We will show that $C_R(F_t^2)$ naturally agrees with the $C^\ast$-algebra of band-dominated operators on $\ell^2(\mathbb N_0)$ by identifying an operator on $F_t^2$ with their respective matrix representation with respect to the standard orthonormal basis on $F_t^2$. This will be our first main result, Theorem \ref{thm_CR_BDO} in the main text of the paper. Building upon this, as well as on previous work concerning radial operators on the Fock space (which we will later denote as $\operatorname{band}_0$ for reasons that will become apparent later), we will continue to prove that the intersection of the above-mentioned algebra with the $C^\ast$-algebra generated by all Toeplitz operators with bounded symbols, denoted $\mathcal T(F_t^2)$, can be identified with the space of band-dominated operators on $\ell^2(\mathbb N_0)$ which satisfy a uniform continuity condition with respect to the square-root metric on all of their diagonals. The precise formulation of this will be contained in Theorem \ref{thm2}. The relations between all the spaces mentioned above is briefly summarized in the diagram shown in Figure \ref{fig:1}.
\begin{figure}
    \centering
    \begin{tikzpicture}
        \draw[rounded corners] (0, 0) rectangle (12, 6);
        \draw[rounded corners] (0.5, 0.5) rectangle (8, 5);
        \draw[rounded corners] (3, 1) rectangle (11.5, 4.5);
        \draw (6, 2.75) circle (1);
        \node at (6.25, 2.76) {$\mathcal K(F_t^2)$};
        \node at (6, 5.5) {$\mathcal L(F_t^2)$};
        \node at (1.5, 4.5) {$C_R(F_t^2)$};
        \node at (10.5, 4) {$\mathcal T(F_t^2)$};
        \draw (3.6, 2.75) ellipse (2 and 1);
        \node at (3.6, 2.75) {$\operatorname{band}_0$};
    \end{tikzpicture}
    \caption{Diagram showing the inclusion relations among the spaces considered in this paper}
    \label{fig:1}
\end{figure}

The organization of the present paper is straightforward: Section \ref{sec:2} will spell out all the preliminaries necessary for rigorously describing the setting described above. Further, our main results and their proofs will be written out there. Section \ref{sec:3} contains some generalizations of the results obtained in Section \ref{sec:2}, as well as some discussion of open problems.

\section{Rotationally-continuous operators on the Fock space}\label{sec:2}
Let $F_t^2 = F_t^2(\mathbb C)$ be the Fock space of holomorphic functions on $\mathbb C$ which are square-integrable with respect to the Gaussian measure
\begin{equation*}
d\mu_t(z) = \frac{1}{\pi t} e^{-\frac{|z|^2}{t}}~dz.
\end{equation*}
It is well-known that $F_t^2$ is a Hilbert space with inner product 
\begin{align*}
    \langle f, g\rangle_t := \int_{\mathbb C} f(z) \overline{g(z)}~d\mu_t(z).
\end{align*}
A standard reference for the Fock space and properties of certain operators on it is the book by Zhu \cite{Zhu}. This book will also be our main reference whenever we use a fact about the Fock space that we do not prove here. In the following, we will denote by $\mathcal L(\mathcal H)$ the bounded linear operators on the Hilbert space $\mathcal H$ and by $\mathcal K(\mathcal H)$ the class of compact operators on $\mathcal H$.

The circle group $\mathbb T$ (considered as a subset of $\mathbb C$) acts on $F_t^2$ via the unitary operators $U_\zeta$, which are defined as:
\begin{align*}
U_{\zeta} g(z) = g(\zeta z), \quad \zeta \in \mathbb T, ~z \in \mathbb C, ~g \in F_t^2.
\end{align*}
Having these operators at hand, we can already define the main object of study of the present paper, namely
\begin{align*}
C_R(F_t^2) :&= \{ A \in \mathcal L(F_t^2); ~\| U_{\zeta} A U_{\zeta}^\ast - A\|_{op} \to 0, ~\zeta \to 1\},
\end{align*}
the space of bounded operators which are continuous with respect to the rotation action. $C_R(F_t^2)$ is a $C^\ast$-algebra. We refer to the elements of this space as rotationally-continuous operators. Clearly, the radial operators (i.e., operators $A \in \mathcal L(F_t^2)$ for which $\zeta \mapsto U_{\zeta} A U_{\zeta}^\ast$ is constant) are contained in there.

The operators $U_\zeta$ satisfy $U_{\zeta}^\ast = U_{\overline{\zeta}} = U_{\zeta}^{-1}$. Further, $\zeta \mapsto U_\zeta$ is continuous with respect to the strong operator topology. Therefore, for any $f \in L^1(\mathbb T)$ and $A \in \mathcal L(F_t^2)$ we can define the operator
\begin{align*}
f \ast_{\mathbb T} A := \int_{\mathbb T} f(\zeta) U_\zeta A U_\zeta^{-1} d\zeta,
\end{align*}
where the integral is understood in strong operator topology, i.e., for every $g \in F_t^2$ we have
\begin{align*}
f \ast_{\mathbb T} A(g) = \int_{\mathbb T} f(\zeta) U_\zeta A U_\zeta^{-1}(g) d\zeta,
\end{align*}
where the latter expression is now defined as a Bochner integral in $F_t^2$. Here, the measure $d\zeta$ is normalized such that $\int_{\mathbb T} d\zeta = 1$. We will later make use of the fact that this Bochner integral agrees with the corresponding weak integral, or written in more explicit terms:
\begin{align}\label{eq:weak_integral}
    \langle f \ast_{\mathbb T} A g, h \rangle_{F_t^2} = \int_{\mathbb T} f(\zeta) \langle U_\zeta A U_\zeta^{-1} g, h\rangle_{F_t^2}~d\zeta, \quad g, h \in F_t^2.
\end{align}
For details on Bochner and weak integration, we refer to \cite[Section II]{Diestel_Uhl1977}. We want to mention that this ``convolution'' is a special case of the convolutions introduced in \cite{Dewage_Mitkowski2025}, namely by choosing the group $G$ appearing there to be $\mathbb T$.

 For $f \in L^1(\mathbb T)$ and $A \in C_R(F_t^2)$, $f \ast_{\mathbb T} A$ is even defined as a Bochner integral in $C_R(F_t^2)$, hence $f \ast_{\mathbb T} A \in C_R(F_t^2)$ in this case.

For $k \in \mathbb Z$ and $A \in \mathcal L(F_t^2)$ we define the ``Fourier coefficients'' $\hat{A}(k)$ by
\begin{equation*}
\hat{A}(k) := f_k \ast_{\mathbb T} A,
\end{equation*}
where $f_k$ is the function
\begin{equation*}
f_k(\zeta) = \zeta^k, \quad \zeta \in \mathbb T.
\end{equation*}
Recall that for $n \in \mathbb N$ the Fej\'{e}r kernel $F_n(\zeta)$ is defined by
\begin{equation*}
F_n(\zeta) := \frac{1}{n} \sum_{k=0}^{n-1} \sum_{s=-k}^k \zeta^s.
\end{equation*}
As is well-known, the Fej\'{e}r kernel acts as an approximate identity on $C(\mathbb T)$: For any $f \in C(\mathbb T)$ we have, with convergence in uniform norm:
\begin{equation*}
F_n \ast f \to f, \quad n \to \infty.
\end{equation*}
See, for example, \cite[Theorem 5.2]{Stein_Shakarchi} for a proof of this.
As a consequence, it is easily verified that convolution by the Fej\'{e}r kernel also acts as an approximate identity of $L^1(\mathbb T)$. Further important properties of the Fej\'{e}r kernel are that it is positive and normalized:
\begin{equation*}
F_n(\zeta) \geq 0 \text{ for } \zeta \in \mathbb T, \quad \int_{\mathbb T} F_n(\zeta) d\zeta = 1.
\end{equation*}
\begin{lem}\label{lemma_approx_id}
For $A \in C_R(F_t^2)$ we have
\begin{equation*}
F_n \ast_{\mathbb T} A \to A, \quad n \to \infty
\end{equation*}
in operator norm.
\end{lem}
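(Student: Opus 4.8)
The plan is to reduce the statement to the classical fact that the Fej\'{e}r kernel is an approximate identity, applied to the scalar continuous function $\zeta \mapsto \|U_\zeta A U_\zeta^\ast - A\|_{op}$. The first thing I would do is upgrade the hypothesis $A \in C_R(F_t^2)$ --- which only asserts continuity of $\zeta \mapsto U_\zeta A U_\zeta^\ast$ at $\zeta = 1$ --- to norm continuity of the map $\Phi \colon \mathbb T \to \mathcal L(F_t^2)$, $\Phi(\zeta) := U_\zeta A U_\zeta^\ast$, on all of $\mathbb T$. This follows from the fact that $\zeta \mapsto U_\zeta$ is a group homomorphism ($U_\zeta U_\eta = U_{\zeta\eta}$) into the unitaries: writing $\zeta = \zeta_0 \eta$ with $\eta \to 1$ gives
\[
\Phi(\zeta) - \Phi(\zeta_0) = U_{\zeta_0}\bigl(U_\eta A U_\eta^\ast - A\bigr)U_{\zeta_0}^\ast,
\]
whose operator norm equals $\|U_\eta A U_\eta^\ast - A\|_{op}$ because $U_{\zeta_0}$ is unitary, and the latter tends to $0$. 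In particular $\Phi$ is a continuous function on a compact group with values in the Banach space $\mathcal L(F_t^2)$, so $\zeta \mapsto F_n(\zeta)\Phi(\zeta)$ is Bochner integrable and, as already noted in the excerpt, $F_n \ast_{\mathbb T} A = \int_{\mathbb T} F_n(\zeta)\Phi(\zeta)\,d\zeta$ is an honest Bochner integral.

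Next I would exploit the normalization. Since $\int_{\mathbb T} F_n(\zeta)\,d\zeta = 1$ we may write $A = \int_{\mathbb T} F_n(\zeta) A\,d\zeta$, so that
\[
F_n \ast_{\mathbb T} A - A = \int_{\mathbb T} F_n(\zeta)\bigl(\Phi(\zeta) - A\bigr)\,d\zeta.
\]
Applying the standard inequality $\bigl\|\int_{\mathbb T} G(\zeta)\,d\zeta\bigr\| \le \int_{\mathbb T}\|G(\zeta)\|\,d\zeta$ for Bochner integrals, together with the positivity $F_n \ge 0$, yields
\[
\|F_n \ast_{\mathbb T} A - A\|_{op} \le \int_{\mathbb T} F_n(\zeta)\,g(\zeta)\,d\zeta, \qquad g(\zeta) := \|U_\zeta A U_\zeta^\ast - A\|_{op}.
\]
Here $g$ is continuous on $\mathbb T$ by the first step and satisfies $g(1) = 0$.

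Finally I would invoke the approximate-identity property. Since $F_n$ is even, $\int_{\mathbb T} F_n(\zeta)g(\zeta)\,d\zeta$ is precisely $(F_n \ast g)(1)$, and $F_n \ast g \to g$ uniformly gives the value $g(1) = 0$ in the limit. To avoid any dependence on the convolution convention, I would instead split the integral over a small neighborhood $V$ of $1$ on which $g < \varepsilon$ and its complement: the first piece is bounded by $\varepsilon$ using $F_n \ge 0$ and $\int_{\mathbb T} F_n = 1$, and the second by $\|g\|_\infty \int_{\mathbb T \setminus V} F_n\,d\zeta$, which tends to $0$ by the concentration of the Fej\'{e}r kernel away from $1$. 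Hence $\|F_n \ast_{\mathbb T} A - A\|_{op} \to 0$.

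I anticipate that the only genuine subtlety --- and thus the step I would be most careful about --- is the passage from continuity at the identity to global norm continuity of $\Phi$, since this is exactly what licenses treating $F_n \ast_{\mathbb T} A$ as a Bochner (rather than merely strong-operator) integral and converts the problem into the scalar approximate-identity statement; everything downstream is the textbook Fej\'{e}r argument.
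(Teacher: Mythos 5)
Your proposal is correct and follows essentially the same route as the paper: write $A = \int_{\mathbb T} F_n(\zeta) A\,d\zeta$ by normalization, bound $\|F_n \ast_{\mathbb T} A - A\|$ by $\int_{\mathbb T} F_n(\zeta)\,\|U_\zeta A U_\zeta^{-1} - A\|\,d\zeta$, and apply the Fej\'{e}r approximate-identity property to the scalar function $\zeta \mapsto \|U_\zeta A U_\zeta^{-1} - A\|$, which vanishes at $\zeta = 1$. The only difference is that you explicitly justify the global continuity of this function via the group property of $\zeta \mapsto U_\zeta$, a step the paper asserts without proof.
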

\begin{proof}
Since the Fej\'{e}r kernel is normalized, it is
\begin{equation*}
A = \int_{\mathbb T} F_n(\zeta) A ~d\zeta.
\end{equation*}
Write $h_A(\zeta) := \| U_{\zeta} A U_{\zeta}^{-1} - A\|$. Then, $h_A$ is a continuous function on $\mathbb T$ as soon as $A \in C_R(F_t^2)$. Therefore, we observe that
\begin{align*}
\|F_n \ast_{\mathbb T} A - A\| &= \left \| \int_{\mathbb T} F_n(\zeta) (U_{\zeta} A U_{\zeta}^{-1} - A)  d\zeta \right \|\\
&\leq \int_{\mathbb T} F_n(\zeta) \| U_{\zeta} A U_{\zeta}^{-1} - A\| d\zeta\\
&= \int_{\mathbb T} F_n(\zeta) h_A(\zeta)~d\zeta\\
&= F_n \ast h_A(1) \to h_A(1) = 0
\end{align*}
as $n \to \infty$.
\end{proof}
Writing out $F_n \ast_{\mathbb T} A$, we obtain
\begin{align*}
F_n \ast_{\mathbb T} A &= \frac{1}{n} \sum_{k=0}^{n-1} \sum_{s=-k}^k f_s \ast_{\mathbb T} A\\
&= \frac{1}{n} \sum_{k=0}^{n-1} \sum_{s = -k}^k \hat{A}(s).
\end{align*}
Hence, the previous lemma states that every $A \in C_R(F_t^2)$ is the limit of a weighted sum of its Fourier coefficients. Let us further investigate those coefficients. For this, let us recall the standard orthonormal basis of $F_t^2$: For every $n \in \mathbb N_0$ we the element $e_n^t$ of the standard orthonormal basis is given by the polynomial
\begin{equation*}
e_n^t(z) = \frac{1}{\sqrt{n!t^n}}z^n.
\end{equation*}
This orthonormal basis can now be used as follows: We can identify every element of $\mathcal L(F_t^2)$ with an operator in $\mathcal L(\ell^2(\mathbb N_0))$ in the natural way: $A \in \mathcal L(F_t^2)$ corresponds to the operator in $\mathcal L(\ell^2(\mathbb N_0))$ given by an infinite matrix, which we will denote (for clearly distinguishing between an operator on $F_t^2$ and its matrix) as $\mathfrak M(A)$. More precisely:
\begin{equation*}
\mathfrak M(A) = (\langle Ae_m^t, e_n^t\rangle_t )_{m, n = 0}^\infty \in \mathcal L(\ell^2(\mathbb N_0)).
\end{equation*}
In the following, we will always denote operators on $\ell^2(\mathbb N_0)$ as $\mathfrak M(A), \mathfrak M(B), \mathfrak M(C)$ and so forth and write $A, B, C,...$ for corresponding operators on $F_t^2$ given by the respective matrices with respect to the standard basis. For example, the unitary operators $U_\zeta$ act on the standard basis as $U_\zeta e_m^t = \zeta^m e_m^t$ for every $\zeta \in \mathbb T$ and $m \in \mathbb N_0$. In particular, these operators correspond to diagonal matrices, i.e., to multiplication operators on $\ell^2(\mathbb N_0)$:
\begin{align}\label{eq:matrix_rotation}
    \mathfrak M(U_\zeta) = \operatorname{diag}(1, \zeta, \zeta^2, \zeta^3, \dots), \quad \zeta \in\mathbb T.
\end{align}
Here, the notion $\operatorname{diag}(a_0, a_1, a_2, \dots)$ denotes the infinite matrix having only the values $a_j$ on the diagonal and zero everywhere else.

For $k \in \mathbb Z$ we define
\begin{equation*}
\band_k := \{ \mathfrak M(C) \in \mathcal L(\ell^2(\mathbb N_0)); ~ \langle Ce_m^t, e_n^t\rangle_t = 0 \text{ unless } m-n = k\},
\end{equation*}
i.e., $\band_k$ consists of those bounded linear operators on $\ell^2(\mathbb N_0)$ whose infinite matrix is only supported on the $k$th off-diagonal.
\begin{lem}\label{lemma2}
\begin{enumerate}[1)]
\item Let $A \in \mathcal L(F_t^2)$. Then, $\mathfrak M(\hat{A}(k)) \in \band_k$. 
\item Let $A \in \mathcal L(F_t^2)$ such that $\mathfrak M(A) \in \band_k$ for some $k \in \mathbb Z$. Then, $A \in C_R(F_t^2)$ and $\hat{A}(l) = \delta_{k,l}A$ for any $l \in \mathbb Z$. Here, $\delta_{k,l}$ is the Kronecker delta, i.e.~it equals $1$ if $k = l$ and is zero otherwise.
\item If $\mathfrak M(A) \in \band_{k_1}$ and $\mathfrak M(B) \in \band_{k_2}$, then $\mathfrak M(A)\mathfrak M(B) = \mathfrak M(AB) \in \band_{k_1 + k_2}$ and $\mathfrak M(A)^\ast = \mathfrak M(A^\ast) \in \band_{-k_1}$.
\end{enumerate}
\end{lem}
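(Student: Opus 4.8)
The plan is to reduce everything to a single elementary observation: the rotation operators act \emph{diagonally} on the standard basis. Indeed, $U_\zeta e_n^t(z) = e_n^t(\zeta z) = \zeta^n e_n^t(z)$, so that $U_\zeta e_n^t = \zeta^n e_n^t$ and, since $U_\zeta^{-1} = U_{\overline\zeta}$, also $U_\zeta^{-1} e_m^t = \zeta^{-m} e_m^t$. The key computation I would carry out first is then the effect of conjugation on matrix entries: for any $A \in \mathcal L(F_t^2)$,
\begin{equation*}
\langle U_\zeta A U_\zeta^{-1} e_m^t, e_n^t\rangle_t = \zeta^{-m}\,\overline{\zeta^{-n}}\,\langle A e_m^t, e_n^t\rangle_t = \zeta^{n-m}\langle A e_m^t, e_n^t\rangle_t,
\end{equation*}
using $\overline{\zeta^{-n}} = \zeta^n$ for $\zeta \in \mathbb T$. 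In words, conjugation by $U_\zeta$ merely multiplies the entry sitting on the $(m-n)$th off-diagonal by $\zeta^{n-m}$. Each of the three assertions is then obtained by reading off this formula together with the orthogonality relation $\int_{\mathbb T}\zeta^j\,d\zeta = \delta_{j,0}$.

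For part 1) I would first record that, since $\hat A(k) g = \int_{\mathbb T}\zeta^k U_\zeta A U_\zeta^{-1} g\,d\zeta$ is a Bochner integral in $F_t^2$, the bounded functional $\langle\,\cdot\,,e_n^t\rangle_t$ may be pulled inside the integral. Combining this with the entry formula gives
\begin{equation*}
\langle \hat A(k) e_m^t, e_n^t\rangle_t = \langle A e_m^t, e_n^t\rangle_t \int_{\mathbb T}\zeta^{k+n-m}\,d\zeta = \delta_{m-n,k}\,\langle A e_m^t, e_n^t\rangle_t,
\end{equation*}
which vanishes unless $m-n = k$; hence $\mathfrak M(\hat A(k)) \in \band_k$. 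For part 2), assuming $\mathfrak M(A) \in \band_k$ means $\langle A e_m^t, e_n^t\rangle_t = 0$ whenever $m-n \ne k$, so on the support of the matrix we have $n-m = -k$ and the entry formula collapses to $\langle U_\zeta A U_\zeta^{-1} e_m^t, e_n^t\rangle_t = \zeta^{-k}\langle A e_m^t, e_n^t\rangle_t$ for \emph{all} $m,n$. Since two bounded operators with identical matrices coincide, this is the operator identity $U_\zeta A U_\zeta^{-1} = \zeta^{-k}A$; membership in $C_R(F_t^2)$ follows from $\|U_\zeta A U_\zeta^{-1} - A\| = |\zeta^{-k}-1|\,\|A\| \to 0$ as $\zeta \to 1$, and the Fourier coefficients evaluate to $\hat A(l) = \big(\int_{\mathbb T}\zeta^{l-k}\,d\zeta\big)A = \delta_{k,l}A$.

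For part 3) the algebraic identities $\mathfrak M(AB) = \mathfrak M(A)\mathfrak M(B)$ and $\mathfrak M(A^\ast) = \mathfrak M(A)^\ast$ are the standard fact that the matrix representation with respect to a fixed orthonormal basis is a $\ast$-homomorphism, and the band statement is a direct index count: expanding $\langle AB e_m^t, e_n^t\rangle_t = \sum_j \langle B e_m^t, e_j^t\rangle_t\,\langle A e_j^t, e_n^t\rangle_t$, the two factors are simultaneously nonzero only when $m-j = k_2$ and $j-n = k_1$, forcing $m-n = k_1+k_2$; and $\langle A^\ast e_m^t, e_n^t\rangle_t = \overline{\langle A e_n^t, e_m^t\rangle_t}$ is nonzero only when $n-m = k_1$, i.e.\ $m-n = -k_1$. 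This yields $\mathfrak M(AB) \in \band_{k_1+k_2}$ and $\mathfrak M(A^\ast) \in \band_{-k_1}$.

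I do not expect a genuine obstacle here: once the diagonalization $U_\zeta e_n^t = \zeta^n e_n^t$ is in hand, all three parts are bookkeeping. The only point requiring a word of care is the interchange of the inner product with the Bochner integral in part 1), which is legitimate because $\langle\,\cdot\,,e_n^t\rangle_t$ is a bounded linear functional and $\hat A(k)$ is a priori a well-defined bounded operator on $F_t^2$.
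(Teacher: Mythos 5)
Your proposal is correct and follows essentially the same route as the paper: diagonalize $U_\zeta$ on the basis, compute the effect of conjugation on matrix entries, and use the orthogonality relations on $\mathbb T$; the only cosmetic difference is that in part 2) the paper expands arbitrary $f,g$ in the basis to reach the identity $U_\zeta A U_\zeta^{-1} = \zeta^{-k}A$, whereas you obtain the same identity by comparing matrix entries. No gaps.
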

\begin{proof}
\begin{enumerate}[1)]
\item We clearly have the identity $U_{\zeta} e_m^t = \zeta^m e_m^t$. Therefore, utilizing equation \eqref{eq:weak_integral}:
\begin{align*}
\langle \hat{A}(k) e_m^t, e_n^t\rangle_t &= \int_{\mathbb T} \zeta^k \langle AU_{\zeta}^{-1} e_m^t, U_{\zeta}^{-1} e_n^t\rangle_t ~d\zeta\\
&= \int_{\mathbb T} \zeta^k \langle \zeta^{-m} Ae_m^t, \zeta^{-n} e_n^t\rangle_t ~d\zeta\\
&= \int_{\mathbb T} \zeta^{k-m+n} \langle Ae_m^t, e_n^t\rangle_t ~d\zeta\\
&= \begin{cases}
\langle Ae_m^t, e_n^t\rangle_t, \quad & m-n = k,\\
0, \quad & \text{otherwise}.
\end{cases}
\end{align*}
\item Assume $k \leq 0$ and hence $m - k \geq 0$ for any $m \geq 0$. Otherwise, the same arguments as below yield the result, only with the roles of $m$ and $n$ exchanged.

The assumption says that $A$ is such that $\langle A e_m^t, e_n^t\rangle_t = 0$ unless $m-n = k$. For arbitrary $f, g \in F_t^2$ we can write
\begin{align*}
f = \sum_{m=0}^\infty f_m e_m^t, \quad g = \sum_{n = 0}^\infty g_n e_n^t.
\end{align*}
Thus, we have
\begin{equation*}
\langle Af, g\rangle_t = \sum_{m=0}^\infty f_m \overline{g_{m-k}} \langle Ae_m^t, e_{m-k}^t\rangle_t.
\end{equation*}
Further, since
\begin{equation*}
U_{\zeta} f = \sum_{m = 0}^\infty f_m \zeta^m e_m^t
\end{equation*}
and similarly for $g$, we get
\begin{align*}
\langle U_{\zeta} A U_{\zeta}^{-1} f, g\rangle_t &= \langle A U_{\zeta}^{-1} f, U_{\zeta}^{-1} g\rangle_t\\
&= \sum_{m=0}^\infty f_m \zeta^{-m} \overline{g_{m-k}}\zeta^{m-k} \langle Ae_m^t, e_{m-k}^t\rangle_t\\
&= \zeta^{-k} \sum_{m=0}^\infty f_m \overline{g_{m-k}}\langle Ae_m^t, e_{m-k}^t\rangle_t\\
&= \zeta^{-k} \langle Af, g\rangle_t, \tag{1}
\end{align*}
hence
\begin{align*}
\langle (A- U_{\zeta} A U_{\zeta}^{-1})f, g\rangle_t = (1 - \zeta^{-k})\langle Af, g\rangle_t.
\end{align*}
From this one easily sees that
\begin{align*}
\| A - U_{\zeta} A U_{\zeta}^{-1}\| &= \sup_{\| f\|= \| g\| = 1} |\langle (A - U_{\zeta} A U_{\zeta}^{-1})f, g\rangle_t|\\
&= |1 - \zeta^{-k}| \sup_{\| f\| = \| g\| = 1} |\langle Af, g\rangle_t|\\
&\leq |1-\zeta^{-k}| \| A\| \to 0, \quad \zeta \to 1.
\end{align*}
This proves $A \in C_R(F_t^2)$. It remains to show that $\hat{A}(l) = \delta_{k,l}A$. Using Equation (1) we get
\begin{align*}
\hat{A}(l) &= f_l \ast_{\mathbb T} A = \int_{\mathbb T} \zeta^l U_{\zeta} A U_{\zeta}^{-1}~d\zeta\\
&= \int_{\mathbb T} \zeta^{l-k} A ~d\zeta\\
&= \begin{cases}
A, \quad &l = k,\\
0, \quad &\text{otherwise}.
\end{cases}
\end{align*}
\item Follows immediately from the definition of the matrix product.\qedhere
\end{enumerate}
\end{proof}
Recall that the algebraic sum $\oplus_{k \in \mathbb Z} \band_k \subset \mathcal L(\ell^2(\mathbb N_0))$ is usually called the algebra of \emph{band operators}. Their norm closure is denoted $\bdo(\ell^2(\mathbb N_0))$, the \emph{band-dominated operators}. We now obtain:
\begin{thm}\label{thm_CR_BDO}
Let $A \in \mathcal L(F_t^2)$. Then, we have $A \in C_R(F_t^2)$ if and only if $\mathfrak M(A) \in \bdo(\ell^2(\mathbb N_0))$.
\end{thm}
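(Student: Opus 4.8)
The plan is to read the theorem as a packaging of the two preceding lemmas, glued together by two structural facts: that $\mathfrak M$ is an isometric $\ast$-isomorphism of $\mathcal L(F_t^2)$ onto $\mathcal L(\ell^2(\mathbb N_0))$ (it is the matrix representation in the orthonormal basis $(e_n^t)$, hence preserves operator norm and is bijective), and that both $C_R(F_t^2)$ and $\bdo(\ell^2(\mathbb N_0))$ are norm-closed. With these in hand, each implication becomes an approximation-plus-closedness argument.

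For the forward direction, suppose $A \in C_R(F_t^2)$. By Lemma \ref{lemma_approx_id} we have $F_n \ast_{\mathbb T} A \to A$ in operator norm, and the expansion computed above gives $F_n \ast_{\mathbb T} A = \frac1n \sum_{k=0}^{n-1} \sum_{s=-k}^{k} \hat A(s)$. By Lemma \ref{lemma2}(1) each $\mathfrak M(\hat A(s))$ lies in $\band_s$, so $\mathfrak M(F_n \ast_{\mathbb T} A)$ is a finite sum of operators supported on single off-diagonals, i.e.\ an element of the algebraic sum $\bigoplus_{k\in\mathbb Z}\band_k$ of band operators. Since $\mathfrak M$ is isometric, $\mathfrak M(F_n \ast_{\mathbb T} A) \to \mathfrak M(A)$ in norm, and as $\bdo(\ell^2(\mathbb N_0))$ is by definition the norm closure of the band operators, we conclude $\mathfrak M(A) \in \bdo(\ell^2(\mathbb N_0))$.

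For the converse, suppose $\mathfrak M(A) \in \bdo(\ell^2(\mathbb N_0))$ and choose band operators with $\mathfrak M(B_j) \to \mathfrak M(A)$ in norm. Each $\mathfrak M(B_j)$ is a finite sum of elements of various $\band_k$, and each such summand is $\mathfrak M$ of an operator lying in $C_R(F_t^2)$ by Lemma \ref{lemma2}(2); since $C_R(F_t^2)$ is a vector space, $B_j \in C_R(F_t^2)$. Using the isometry of $\mathfrak M$ once more, $B_j \to A$ in operator norm, and because $C_R(F_t^2)$ is a $C^\ast$-algebra, hence norm-closed, we get $A \in C_R(F_t^2)$.

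I do not expect a serious obstacle here, since all the analytic content has already been extracted into Lemmas \ref{lemma_approx_id} and \ref{lemma2}. The only points requiring care are recording that $\mathfrak M$ is a norm-preserving bijection (so that norm convergence transfers in both directions) and observing that the Fej\'er averages are \emph{genuine} band operators with finite diagonal support, which is exactly what Lemma \ref{lemma2}(1) supplies. The real engine remains the approximate-identity estimate of Lemma \ref{lemma_approx_id}, already established.
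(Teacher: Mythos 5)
Your proof is correct and follows essentially the same route as the paper: the forward direction via the Fej\'er approximation of Lemma \ref{lemma_approx_id} together with Lemma \ref{lemma2}(1), and the converse via Lemma \ref{lemma2}(2) plus the norm-closedness of $C_R(F_t^2)$. Your explicit remarks that $\mathfrak M$ is isometric and that both classes are norm-closed only make precise what the paper leaves implicit.
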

\begin{proof}
Let $A \in C_R(F_t^2)$. Then, by Lemma \ref{lemma_approx_id}, $\mathfrak M(A)$ can be approximated by band operators, hence $\mathfrak M(A) \in \bdo(\ell^2(\mathbb N_0))$. On the other hand, assume $A \in \mathcal L(F_t^2)$ is such that $\mathfrak M(A) \in \bdo(\ell^2(\mathbb N_0))$. Let $\varepsilon > 0$. Then, there is some band operator $\mathfrak M(B)$ such that $\| \mathfrak M(A) - \mathfrak M(B)\| < \varepsilon$. By definition, there are operators $\mathfrak M(B_k) \in \band_k$ and $N \in \mathbb N$ such that
\begin{equation*}
\mathfrak M(B) = \sum_{k = -N}^N \mathfrak M(B_k).
\end{equation*}
By Lemma \ref{lemma2}, each of the $B_k$ is in $C_R(F_t^2)$, hence $B$ is. Since $\varepsilon > 0$ was arbitrary, $A$ is also in $C_R(F_t^2)$.
\end{proof}
\begin{rem}
\begin{enumerate}
    \item $\bdo(\ell^2(\mathbb N_0))$ is a well-studied algebra. Its operators are well understood, e.g., compactness and Fredholm properties can be characterized in terms of limit operators. For a detailed account on the theory of band-dominated operators we refer to \cite{Lindner2006, Rabinovich_Roch_Silbermann2004}. We remark that usually one considers $\bdo(\ell^2(\mathbb Z))$, i.e., two-sided infinite matrices which are band-dominated, instead of $\bdo(\ell^2(\mathbb N_0))$. Nevertheless, both spaces are closely related: Elements $A$ of $\bdo(\ell^2(\mathbb N_0))$ are of the form $PBP$, where $B \in \bdo(\ell^2(\mathbb N_0))$ and $P$ is the orthogonal projection from $\ell^2(\mathbb Z)$ to $\ell^2(\mathbb N_0)$ (where the latter is understood in the natural way as a subspace of $\ell^2(\mathbb N_0)$). 
    \item In \cite[Theorem 2.1.6]{Rabinovich_Roch_Silbermann2004}, another interesting characterization of $\operatorname{BDO}(\ell^2(\mathbb Z))$ is derived, the roots of which go back to the paper \cite{Kozak_Simonenko1980}: It is proven that
    \begin{align*}
        \operatorname{BDO}(\ell^2(\mathbb Z)) = \{ T \in \mathcal L(\ell^2(\mathbb Z)): ~\lim_{r \to \infty} ~\sup_{X, Y \subset \mathbb Z: ~d(X,Y) \geq r} \| M_{\mathbf 1_X} T M_{\mathbf 1_Y}\| = 0\},
    \end{align*}
    where $d(X, Y)$ is the distance of the set $X$ and $Y$. Writing $\operatorname{BDO}(\ell^2(\mathbb N_0)) = P \operatorname{BDO}(\ell^2(\mathbb Z)) P$, where $P$ is as above the orthogonal projection from $\ell^2(\mathbb Z)$ to $\ell^2(\mathbb N_0)$, it is not hard to derive an analogous characterization for $\operatorname{BDO}(\ell^2(\mathbb N_0))$.
    \item Let us denote by $\varphi_\zeta$ the function on $\mathbb N_0$ given by $\varphi_\zeta(m) = \zeta^m$, where $\zeta \in \mathbb T$, and further by $M_{\varphi_\zeta}$ the operator of multiplication by $\varphi_\zeta$ on $\ell^2(\mathbb N_0)$. Then, from Eq.~\eqref{eq:matrix_rotation} one easily sees that by looking at the matrix representations, $C_R(F_t^2)$ can be identified with the following space:
    \begin{align*}
        \mathfrak M(C_R(F_t^2)) = \{ T \in \mathcal L(\ell^2(\mathbb N_0)): ~\| M_{\varphi_\zeta} T - TM_{\varphi_\zeta}\| \to 0 \text{ as } \zeta \to 1\}.
    \end{align*}
    Indeed, a similar space (over $\ell^2(\mathbb Z)$) was considered in \cite[Theorem 2.1.6]{Rabinovich_Roch_Silbermann2004} and proven there to be identical with $\operatorname{BDO}(\ell^2(\mathbb Z))$. One could also use this result and use the fact that multiplication operators on $\ell^2(\mathbb Z)$ commute with the orthogonal projection from $\ell^2(\mathbb Z)$ onto $\ell^2(\mathbb N_0)$ to deduce the above Theorem \ref{thm_CR_BDO}. Further, the proof of \cite[Theorem 2.1.6]{Rabinovich_Roch_Silbermann2004} uses a similar trick of convolution by a Fej\'{e}r kernel. We thank the reviewer for pointing this out to us. Nevertheless, we think that it is for the reader's benefit to give a direct discussion of the proof without using the well-known result. 
    \item While the above result gives an operator-theoretic description on the Fock space of $\operatorname{BDO}(\ell^2(\mathbb N_0))$, one could also try to find a Fock space description of the dense subalgebra of band operators. We could not come up with such a description and hence pose this as an open problem: Find an operator-theoretic description with methods of the Fock space of
    \begin{align*}
        \mathfrak M^{-1}(\bigoplus_{k \in \mathbb Z} \operatorname{band}_k).
    \end{align*}
\end{enumerate}
\end{rem}

As a consequence of the theorem, we have the following result:
\begin{thm}[{\cite[Proposition 2.1.7]{Rabinovich_Roch_Silbermann2004}}]
$\bdo(\ell^2(\mathbb N_0))$ contains $\mathcal K(\ell^2(\mathbb N_0))$. 
\end{thm}
As a consequence, $C_R(F_t^2)$ of course contains $\mathcal K(F_t^2)$. We want to note that this can also be verified directly in a straightforward manner, using that the map $\zeta \mapsto U_\zeta$ is continuous in strong operator topology, hence each rank one operator is contained in $C_R(F_t^2)$. Since this has probably not been observed in the literature, we explicitly mention the result:
\begin{cor}
    $C_R(F_t^2)$ contains $\mathcal K(F_t^2)$. 
\end{cor}

We now want to spend some lines on relating Theorem \ref{thm_CR_BDO} to existing results in the literature. As a starting point for this, we note the following:
\begin{lem}
    For each $\zeta \in \mathbb T$ it is $\mathfrak M(U_\zeta) = \operatorname{diag}(1, \zeta, \zeta^2, \zeta^3, \dots)$, where the latter denotes the diagonal matrix with the respective values on the diagonal.
\end{lem}
A diagonal matrix of course acts on $\ell^2(\mathbb N_0)$ simply as a multiplication operator. Hence, $U_\zeta$ can be identified with the multiplication operator $M_{\varphi_\zeta}$ on $\ell^2(\mathbb N_0)$, where $\varphi_\zeta(n) = \zeta^n$ for every $n \in \mathbb N_0$. Hence, we see that:
\begin{lem}
    $\mathfrak M(C_R(F_t^2)) = \{ T \in \mathcal L(\ell^2(\mathbb N_0)): ~\| M_{\varphi_\zeta} T - TM_{\varphi_\zeta}\| \to 0 \text{ as } \zeta \to 1\}.$
\end{lem}
Since we have already seen that, using the map $\mathfrak M$, $C_R(F_T^2)$ can be identified with $\bdo(\ell^2(\mathbb N_0))$, we obtain:
\begin{cor}\label{cor_bdo_N}
    The following equality holds true:
    \begin{align*}
        \bdo(\ell^2(\mathbb N_0)) = \{ T \in \mathcal L(\ell^2(\mathbb N_0)): ~\| M_{\varphi_\zeta} T - TM_{\varphi_\zeta}\| \to 0, ~\zeta \to 1\}.
    \end{align*}
\end{cor}
An analogous characterization of $\bdo(\ell^2(\mathbb Z))$ is well-known in the literature, see \cite[Theorem 2.1.6, condition (e)]{Rabinovich_Roch_Silbermann2004} or \cite[Theorem 1.42, condition (v)]{Lindner2006}. Both of these results can easily be reformulated as follows, where $\psi_\zeta(n) = \zeta^n$, $n \in \mathbb Z$ denotes a function on $\mathbb Z$ and $M_{\psi_\zeta}$ the appropriate multiplication operator on $\ell^2(\mathbb Z)$:
\begin{align}\label{equality_bdo_Z}
    \bdo(\ell^2(\mathbb Z)) = \{ T \in \mathcal L(\ell^2(\mathbb Z)): ~\| M_{\psi_\zeta} T - T M_{\psi_\zeta}\| \to 0 \text{ as } \zeta \to 1\}.
\end{align}
We denote again by $P$ the orthogonal projection from $\ell^2(\mathbb Z)$ to $\ell^2(\mathbb N_0)$. Combining equality \eqref{equality_bdo_Z} with the fact that $M_{\varphi_\zeta} P = PM_{\psi_\zeta}$ and further that $\bdo(\ell^2(\mathbb N_0)) = P \bdo(\ell^2(\mathbb Z)) P$, it is not hard to derive Corollary \ref{cor_bdo_N} and hence also Theorem \ref{thm_CR_BDO}. Hence, another proof of Theorem \ref{thm_CR_BDO} can be given by means of Equation \eqref{equality_bdo_Z}. Nevertheless, we want to point out that both approaches are not entirely unrelated: The proof of the inclusion ``$\supseteq$'' in \eqref{equality_bdo_Z} given in \cite{Rabinovich_Roch_Silbermann2004} uses a similar trick utilizing continuity of the group action and properties of the Fej\'{e}r kernel that we used.

Another operator algebra of interest, which also contains $\mathcal K(F_t^2)$, is the Toeplitz algebra $\mathcal T(F_t^2)$. We now remind the reader of its definition:

We let $P_t$ denote the orthogonal projection from $L^2(\mathbb C, \mu_t)$ to $F_t^2$. Then, for any $h \in L^\infty(\mathbb C)$, the \emph{Toeplitz operator} $T_h^t$ is defined on $F_t^2$ by $T_h^t(g) = P_t(hg)$. Obviously, $\| T_h^t\|_{op} \leq \| h\|_\infty$. We want to emphasize that the map $L^\infty(\mathbb C) \ni h \mapsto T_h^t$ is injective \cite[Theorem 4]{Berger_Coburn1986}.

By $\mathcal T(F_t^2)$ we now denote the $C^\ast$-subalgebra of $\mathcal L(F_t^2)$ generated by all Toeplitz operators with bounded symbols:
\begin{align*}
    \mathcal T(F_t^2) := C^\ast(\{ T_h^t \in F_t^2: ~h \in L^\infty(\mathbb C)\}).
\end{align*}
We want to note that, just as $C_R(F_t^2)$, $\mathcal T(F_t^2)$ can be described as the algebra of operators which are continuous with respect to a certain group action, cf.~\cite[Theorem 3.1]{Fulsche2020}. Nevertheless, this will not be important throughout this paper.

It is well-known that $\mathcal K(F_t^2) \subset \mathcal T(F_t^2)$. Since both $\mathcal T(F_t^2)$ and $C_R(F_t^2)$ are $C^\ast$-algebras, we obtain the following:
\begin{prop}\label{Prop:containment_K}
$\mathcal T(F_t^2) \cap C_R(F_t^2)$ is a $C^\ast$-subalgebra of $\mathcal L(F_t^2)$ containing $\mathcal K(F_t^2)$.
\end{prop}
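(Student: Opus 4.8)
The plan is to verify the two assertions of the proposition separately, both of which reduce to facts already assembled above: the $C^\ast$-algebra structure follows from general nonsense about intersections, while the containment of the compacts is immediate from the two membership statements $\mathcal K(F_t^2) \subset \mathcal T(F_t^2)$ and $\mathcal K(F_t^2) \subset C_R(F_t^2)$.

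First I would argue that the intersection of any two $C^\ast$-subalgebras $\mathcal A, \mathcal B \subset \mathcal L(\mathcal H)$ is again a $C^\ast$-subalgebra. Each of the defining closure properties is inherited from both factors simultaneously: if $S, T \in \mathcal A \cap \mathcal B$ and $\lambda \in \mathbb C$, then $S + \lambda T$, $ST$ and $S^\ast$ all lie in $\mathcal A$ (since $\mathcal A$ is a $\ast$-subalgebra) and likewise in $\mathcal B$, hence in the intersection; thus $\mathcal A \cap \mathcal B$ is a $\ast$-subalgebra. Norm-closedness is equally clear, since a norm-convergent sequence in $\mathcal A \cap \mathcal B$ has a limit lying in $\mathcal A$ and in $\mathcal B$ by closedness of each, hence in the intersection. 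Applying this with $\mathcal A = \mathcal T(F_t^2)$ and $\mathcal B = C_R(F_t^2)$, both of which are $C^\ast$-subalgebras of $\mathcal L(F_t^2)$, yields that $\mathcal T(F_t^2) \cap C_R(F_t^2)$ is a $C^\ast$-subalgebra.

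For the containment it suffices to observe that $\mathcal K(F_t^2)$ is contained in each factor. The inclusion $\mathcal K(F_t^2) \subset \mathcal T(F_t^2)$ is the well-known fact recalled above. For $\mathcal K(F_t^2) \subset C_R(F_t^2)$ I would invoke the identification theorem together with the statement that $\bdo(\ell^2(\mathbb N_0))$ contains $\mathcal K(\ell^2(\mathbb N_0))$: the matrix map $\mathfrak M$ is the unitary equivalence implemented by the orthonormal basis $\{e_n^t\}$, so it carries $\mathcal K(F_t^2)$ onto $\mathcal K(\ell^2(\mathbb N_0)) \subset \bdo(\ell^2(\mathbb N_0))$, whence every compact operator on $F_t^2$ is radially continuous. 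Alternatively, one may use the direct argument sketched above, namely that each rank-one operator lies in $C_R(F_t^2)$ by strong continuity of $\zeta \mapsto U_\zeta$ and that $C_R(F_t^2)$ is norm-closed. Combining the two inclusions gives $\mathcal K(F_t^2) \subset \mathcal T(F_t^2) \cap C_R(F_t^2)$.

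As for the main obstacle: there is essentially none, since all the substantive work has been done in the preceding theorems. The only point warranting a moment's care is confirming that $\mathcal K(F_t^2) \subset C_R(F_t^2)$, but this has already been established, so the proposition is just the formal conjunction of the two containments with the elementary observation about intersections of $C^\ast$-subalgebras.
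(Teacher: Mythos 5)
Your proposal is correct and follows exactly the reasoning the paper intends: the paper states this proposition without a separate proof, deriving it from the preceding observations that the intersection of two $C^\ast$-subalgebras is a $C^\ast$-subalgebra, that $\mathcal K(F_t^2) \subset \mathcal T(F_t^2)$ is well known, and that $\mathcal K(F_t^2) \subset C_R(F_t^2)$ follows either from the identification with $\bdo(\ell^2(\mathbb N_0))$ or from the strong continuity of $\zeta \mapsto U_\zeta$ applied to rank-one operators. Nothing is missing.
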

For $h \in L^1(\mathbb C)$ we can also define $R_\zeta h(z) = h(\zeta z)$. Then, $\zeta \mapsto R_\zeta$ acts strongly continuously on $L^1(\mathbb C)$, i.e., for every $h \in L^1(\mathbb C)$ the map $\zeta \mapsto R_\zeta h$ is continuous from $\mathbb T$ to $L^1(\mathbb C)$. This can be verified directly for $h \in C_c(\mathbb C)$ using the dominated convergence theorem and then, for general $h \in L^1(\mathbb C)$, using density of $C_c(\mathbb C)$.

Then, by duality, $\zeta \mapsto R_\zeta$ (defined by the same formula) acts weak$^\ast$ continuous on $L^\infty(\mathbb C)$, meaning that for every $h \in L^\infty(\mathbb C)$ and $g \in L^1(\mathbb C)$ the map $z \mapsto \int_\mathbb C U_\zeta h(w)~g(w)~dw$ is continuous. Hence, for any $h \in L^\infty(\mathbb C)$ and $f \in L^1(\mathbb T)$ we can define the following as an integral in the weak$^\ast$ sense in $L^\infty(\mathbb C)$:
\begin{align*}
f \ast_{\mathbb T} h := \int_{\mathbb T} f(\zeta) R_\zeta(h)~d\zeta \in L^\infty(\mathbb C).
\end{align*}
In analogy to the case of operators, we now set for $h \in L^\infty(\mathbb C)$:
\begin{align*}
\hat{h}(k) := f_k \ast_{\mathbb T} h \in L^\infty(\mathbb C).
\end{align*}
We just want to mention that, for sufficiently nice functions (say, $h$ continuous) one can use polar coordinates to calculate $\widehat{h}(k)$ as:
\begin{align*}
    \widehat{h}(k)(re^{i\theta}) = e^{-ik\theta} \int_{\mathbb T} \zeta^k h(r\zeta)~d\zeta.
\end{align*}

In what follows, we will try to understand the algebra $\mathcal T(F_t^2) \cap C_R(F_t^2)$. An important tool for the study of operators on $F_t^2$, in particular of Toeplitz operators, is the \emph{Berezin transform}: For $A \in \mathcal L(F_t^2)$ we set
\begin{equation*}
\mathcal B(A)(z) = \langle A k_z^t, k_z^t\rangle_t, \quad z \in \mathbb C.
\end{equation*}
Here,
\begin{equation*}
k_z^t(w) = e^{\frac{w \cdot \overline{z}}{t} - \frac{|z|^2}{t}}
\end{equation*}
is the \emph{normalized reproducing kernel}. Note that the Berezin transform is injective \cite[Theorem 2.2]{Stroethoff1997}. For $f \in L^\infty(\mathbb C)$ and $t > 0$ we define
\begin{equation*}
\mathcal B_t(f)(z) := \langle T_f^t k_z^t, k_z^t\rangle_t.
\end{equation*}
\begin{lem} Let $k \in \mathbb Z$. 
\begin{enumerate}[1)]
\item For $f \in L^\infty(\mathbb C)$ we have
\begin{equation*}
\mathcal B_t(\hat{f}(k)) := \widehat{\mathcal B_t(f)}(k).
\end{equation*}
\item For $A \in \mathcal L(F_t^2)$ we have
\begin{equation*}
\mathcal B(\hat{A}(k)) = \widehat{\mathcal B(A)}(k).
\end{equation*}
\item Let $A \in \mathcal L(F_t^2)$. Then, $\mathfrak M(A) \in \band_k$ if and only if the Berezin transform satisfies
\begin{align*}
\mathcal B(A) = \widehat{\mathcal B(A)}(k).
\end{align*}
\end{enumerate}
\end{lem}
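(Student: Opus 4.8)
The plan is to run all three parts off two \emph{intertwining relations} between the Berezin transform and the two circle actions. On the operator side, a direct computation using $|\zeta| = 1$ gives $U_{\overline\zeta} k_z^t = k_{\zeta z}^t$, since $k_z^t(\overline\zeta w) = e^{(\overline\zeta w)\overline z/t - |z|^2/t} = k_{\zeta z}^t(w)$ (note $\overline{\zeta z} = \overline\zeta\,\overline z$ and $|\zeta z| = |z|$). Consequently, for $A \in \mathcal L(F_t^2)$,
\[
\mathcal B(U_\zeta A U_\zeta^{-1})(z) = \langle A U_\zeta^{-1} k_z^t, U_\zeta^{-1} k_z^t\rangle_t = \langle A k_{\zeta z}^t, k_{\zeta z}^t\rangle_t = \mathcal B(A)(\zeta z),
\]
that is, $\mathcal B \circ \operatorname{Ad}(U_\zeta) = R_\zeta \circ \mathcal B$. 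On the symbol side I would establish the covariance $U_\zeta T_f^t U_\zeta^{-1} = T_{R_\zeta f}^t$ --- which follows from $U_\zeta P_t = P_t U_\zeta$ (as $U_\zeta$ is a $\mu_t$-preserving unitary on $L^2(\mathbb C, \mu_t)$ fixing the holomorphic subspace) together with $U_\zeta(fg) = (R_\zeta f)(U_\zeta g)$ --- and combine it with the operator relation to get $\mathcal B_t(R_\zeta f) = R_\zeta \mathcal B_t(f)$; alternatively this last identity drops out of the integral representation $\mathcal B_t(f)(z) = \int_{\mathbb C} f(w)\,|k_z^t(w)|^2 \, d\mu_t(w)$ and the rotation invariance of $\mu_t$.

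Granting these, parts 1) and 2) are averaging arguments. For 2), I would write $\hat A(k) k_z^t = \int_{\mathbb T} \zeta^k U_\zeta A U_\zeta^{-1} k_z^t \, d\zeta$ as a Bochner integral in $F_t^2$; pairing with $k_z^t$ and moving the continuous functional $\langle \,\cdot\,, k_z^t\rangle_t$ through the integral gives
\[
\mathcal B(\hat A(k))(z) = \int_{\mathbb T} \zeta^k \, \mathcal B(U_\zeta A U_\zeta^{-1})(z) \, d\zeta = \int_{\mathbb T} \zeta^k \, \mathcal B(A)(\zeta z)\, d\zeta = \widehat{\mathcal B(A)}(k)(z).
\]
Part 1) is the symbol analogue: either repeat the computation with the symbol-side intertwining relation, or deduce it from 2) applied to $A = T_f^t$ after checking $\widehat{T_f^t}(k) = T_{\hat f(k)}^t$ (which follows from the covariance identity by pairing both sides against $g_1, g_2 \in F_t^2$ and using that $h \mapsto \langle T_h^t g_1, g_2\rangle_t = \int_{\mathbb C} h\, g_1 \overline{g_2}\, d\mu_t$ is weak$^\ast$ continuous).

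Part 3) is then formal. If $\mathfrak M(A) \in \band_k$, then Lemma \ref{lemma2}.2) gives $\hat A(k) = A$, so 2) yields $\mathcal B(A) = \mathcal B(\hat A(k)) = \widehat{\mathcal B(A)}(k)$. Conversely, if $\mathcal B(A) = \widehat{\mathcal B(A)}(k)$, then 2) rewrites this as $\mathcal B(A) = \mathcal B(\hat A(k))$, and injectivity of the Berezin transform forces $A = \hat A(k)$; since $\mathfrak M(\hat A(k)) \in \band_k$ by Lemma \ref{lemma2}.1), we conclude $\mathfrak M(A) \in \band_k$.

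The only genuine obstacle is bookkeeping of topologies: the two convolutions $\ast_{\mathbb T}$ are defined by a strong/Bochner integral on the operator side and by a weak$^\ast$ integral on the $L^\infty$ side, whereas evaluating a Berezin transform at a point $z$ is a pointwise (weak operator) functional, so the interchanges above must be justified rather than assumed. This is harmless because $\mathcal B(A)$ and $\mathcal B_t(f)$ are \emph{bounded continuous} functions and $\zeta \mapsto \mathcal B(A)(\zeta z)$ is continuous for each fixed $z$, so the pointwise integral $\int_{\mathbb T} \zeta^k \mathcal B(A)(\zeta z)\, d\zeta$ is precisely the value at $z$ of the weak$^\ast$ average $\widehat{\mathcal B(A)}(k)$, while on the operator side the pairing with $k_z^t$ commutes with the Bochner integral by continuity of the functional. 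Making these identifications precise is the one step that requires care, but it involves no deep difficulty.
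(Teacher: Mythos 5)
Your proposal is correct and follows essentially the same route as the paper: the key identity $U_\zeta^{-1}k_z^t = k_{\zeta z}^t$ giving the intertwining $\mathcal B(U_\zeta A U_\zeta^{-1}) = R_\zeta \mathcal B(A)$, averaging against $\zeta^k$ for parts 1) and 2) (the paper does part 1) by the equivalent direct Fubini/change-of-variables computation exploiting rotation invariance of the Gaussian), and injectivity of the Berezin transform combined with Lemma \ref{lemma2} for part 3). Your extra care about matching the weak$^\ast$/Bochner integrals with pointwise evaluation is a sound refinement of what the paper leaves implicit.
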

\begin{proof}
\begin{enumerate}[1)]
\item This is an application of Fubini's Theorem and a simple integral transform:
\begin{align*}
\mathcal B_t(\hat{f}(k))(z) &= \frac{1}{\pi t} \int_{\mathbb C} \int_{\mathbb T} \zeta^k f(\zeta w) e^{-\frac{|w-z|^2}{t}}~d\zeta ~dw \\
&= \frac{1}{\pi t} \int_{\mathbb T} \zeta^k \int_{\mathbb C} f(\zeta w) e^{-\frac{|w-z|^2}{t}}~dw ~d\zeta\\
&= \frac{1}{\pi t} \int_{\mathbb T} \zeta^k \int_{\mathbb C} f(v) e^{-\frac{|\overline{\zeta}v - z|^2}{t}} ~dv~d\zeta\\
&= \frac{1}{\pi t} \int_{\mathbb T} \zeta^k \int_{\mathbb C} f(v) e^{-\frac{|v - \zeta z|^2}{t}} ~dv~d\zeta\\
&= \widehat{\mathcal B_t(f)}(k)(z).
\end{align*}
\item Since the defining integrals exist in strong operator topology, we have
\begin{align*}
\mathcal B (\hat{A}(k))(z) &= \langle \hat{A}(k) k_z^t, k_z^t \rangle_t\\
&= \int_{S^1} \zeta^k \langle A U_\zeta^{-1}k_z^t, U_\zeta^{-1} k_z^t\rangle_t ~d\zeta\\
&= \int_{S^1} \zeta^k \langle A k_{\zeta z}^t, k_{\zeta z}^t\rangle_t~d\zeta\\
&= \widehat{\mathcal B(A)}(k)(z).
\end{align*}
Here, we used the identity
\begin{align*}
U_{\zeta} k_z^t(w) = e^{\frac{(\zeta w) \cdot \overline{z}}{t} - \frac{|z|^2}{2t}} = k_{\overline{\zeta}z}^t(w).
\end{align*}
\item Assume $A$ is such that $\mathcal B(A) = \widehat{\mathcal B(A)}(k)$. By 2) we have
\begin{equation*}
\mathcal B(A) = \mathcal B(\widehat{A}(k)).
\end{equation*}
Since the Berezin transform is injective, this yields $A = \widehat{A}(k)$, i.e., $\mathfrak M(A) \in \band_k$. The other implication is obvious.\qedhere
\end{enumerate}
\end{proof}
\begin{rem}
Part 3) of the previous lemma generalized the well-known result that an operator is radial if and only if its Berezin transform is radial, cf.~\cite[Theorem 2.5]{Esmeral_Maximenko2016}.
\end{rem}
We define
\begin{align*}
C_R(\mathbb C):= \{ h \in L^\infty(\mathbb C); ~ \zeta \mapsto R_\zeta(h) \text{ is } \| \cdot \|_\infty-\text{continuous}\}.
\end{align*}
For every $h \in C_R(\mathbb C)$, $f_k \ast_{\mathbb T} h = \hat{h}(k)$ exists as a Bochner integral in $C_R(\mathbb C)$ and therefore $\hat{h}(k) \in C_R(\mathbb C)$ in that case. Further, similarly to the case of operators from $C_R(F_t^2)$, $F_n \ast h \to h$ in $L^\infty(\mathbb C)$-norm for such $h$.
\begin{lem}
\begin{enumerate}[1)]
\item Let $h \in C_R(\mathbb C)$. Then, $T_h^t \in C_R(F_t^2)$.
\item Let $A \in C_R(F_t ^2)$. Then, $\mathcal B(A) \in C_R(\mathbb C)$.
\end{enumerate}
\end{lem}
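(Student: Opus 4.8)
The plan is to reduce both statements to two intertwining identities relating the circle action $U_\zeta$ on operators to the action $R_\zeta$ on functions. The decisive observation for part 1) is that, since the Gaussian measure $\mu_t$ is rotation-invariant, each $U_\zeta$ extends to a unitary operator on all of $L^2(\mathbb C, \mu_t)$, and this extension maps holomorphic functions to holomorphic functions; hence it leaves $F_t^2$ invariant and commutes with the Bergman projection $P_t$, i.e.\ $U_\zeta P_t = P_t U_\zeta$ on $L^2(\mathbb C, \mu_t)$. For part 2), the key input is the transformation rule $U_\zeta^{-1} k_z^t = k_{\zeta z}^t$ for the normalized reproducing kernels (which follows from the identity $U_\eta k_z^t = k_{\overline{\eta}z}^t$ recorded above, taking $\eta = \overline{\zeta}$), together with the fact that $\mathcal B$ is a contraction from $\mathcal L(F_t^2)$ into $L^\infty(\mathbb C)$, since $|\mathcal B(B)(z)| = |\langle B k_z^t, k_z^t\rangle_t| \leq \|B\|_{op}$ for every $z$.

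For part 1), I would first establish the operator identity $U_\zeta T_h^t U_\zeta^{-1} = T_{R_\zeta h}^t$. Writing $T_h^t = P_t M_h$, where $M_h$ denotes multiplication by $h$, the commutation $U_\zeta P_t = P_t U_\zeta$ reduces this to computing $U_\zeta M_h U_\zeta^{-1}$. A direct evaluation gives $(U_\zeta M_h U_\zeta^{-1} f)(z) = h(\zeta z) f(z) = (R_\zeta h)(z)\, f(z)$, so $U_\zeta M_h U_\zeta^{-1} = M_{R_\zeta h}$ and therefore $U_\zeta T_h^t U_\zeta^{-1} = T_{R_\zeta h}^t$. Combining this with the standard bound $\|T_g^t\|_{op} \leq \|g\|_\infty$ yields
\begin{equation*}
\|U_\zeta T_h^t U_\zeta^{-1} - T_h^t\|_{op} = \|T_{R_\zeta h - h}^t\|_{op} \leq \|R_\zeta h - h\|_\infty,
\end{equation*}
and the right-hand side tends to $0$ as $\zeta \to 1$ precisely because $h \in C_R(\mathbb C)$. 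Hence $T_h^t \in C_R(F_t^2)$.

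For part 2), I would first record the intertwining identity for the Berezin transform. Using $U_\zeta^\ast = U_\zeta^{-1}$ and the kernel transformation rule above,
\begin{equation*}
\mathcal B(U_\zeta A U_\zeta^{-1})(z) = \langle A U_\zeta^{-1} k_z^t, U_\zeta^{-1} k_z^t\rangle_t = \langle A k_{\zeta z}^t, k_{\zeta z}^t\rangle_t = \mathcal B(A)(\zeta z),
\end{equation*}
that is, $\mathcal B(U_\zeta A U_\zeta^{-1}) = R_\zeta \mathcal B(A)$; this is essentially the computation already carried out in the proof of the preceding lemma. By linearity of $\mathcal B$ and its contractivity, we then obtain
\begin{equation*}
\|R_\zeta \mathcal B(A) - \mathcal B(A)\|_\infty = \|\mathcal B(U_\zeta A U_\zeta^{-1} - A)\|_\infty \leq \|U_\zeta A U_\zeta^{-1} - A\|_{op},
\end{equation*}
which converges to $0$ as $\zeta \to 1$ by the assumption $A \in C_R(F_t^2)$. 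Thus $\mathcal B(A) \in C_R(\mathbb C)$.

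Neither part is genuinely hard once the intertwining identities are in place; the only step requiring real care is the justification that $U_\zeta$ commutes with $P_t$, which rests on the rotation-invariance of $\mu_t$ and on $U_\zeta$ mapping $F_t^2$ onto itself. After that, each continuity statement reduces to the appropriate intertwining identity followed by a single norm estimate.
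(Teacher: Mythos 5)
Your proof is correct and follows essentially the same route as the paper: part 1) via the intertwining identity $U_\zeta T_h^t U_\zeta^{-1} = T_{R_\zeta h}^t$ combined with $\|T_g^t\|_{op} \leq \|g\|_\infty$, and part 2) via $\mathcal B(U_\zeta A U_\zeta^{-1}) = R_\zeta \mathcal B(A)$ combined with the contractivity of the Berezin transform. The only difference is that you spell out the justification of the intertwining identities (commutation of $U_\zeta$ with $P_t$, the kernel transformation rule) in more detail than the paper, which simply cites them as standard.
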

\begin{proof}
\begin{enumerate}[1)]
\item Follows from the following standard estimates:
\begin{align*}
\| T_h^t - U_\zeta T_h^t U_\zeta^{-1}\| &= \| T_h^t - T_{h(\zeta \cdot)}^t\|\\
&\leq \| h - h(\zeta \cdot)\|_\infty\\
&\to 0, \quad \zeta \to 1.
\end{align*}
\item We have
\begin{align*}
\| \mathcal B(A) - \eta_\zeta \mathcal B(A)\| &= \sup_{z \in \mathbb C} |\langle (A - U_\zeta A U_\zeta^{-1})k_z^t, k_z^t\rangle_t\\
&\leq \| A - U_\zeta A U_\zeta^{-1}\|\\
&\to 0, \quad \zeta \to 1,
\end{align*}
hence $\mathcal B(A) \in C_R(\mathbb C)$.\qedhere
\end{enumerate}
\end{proof}
\begin{ex}\label{exampleA}
Let $k > 0$. Then, the function $z \mapsto h_k(z) := \frac{z^k}{|z|^k}$ is defined almost everywhere on $\mathbb C$ and clearly contained in $C_R(\mathbb C)$. The Toeplitz operator $T_{h_k}^t$ is therefore an element of $C_R(F_t^2)$. It is even true that $\mathfrak M(T_{h_k}^t) \in \band_k$, as the following computations show:
\begin{align*}
T_{h_k}^t e_m^t(w) &= \frac{1}{\pi t} \int_{\mathbb C} \frac{1}{\sqrt{t^m m!}} \frac{z^{m+k}}{|z|^k} e^{\frac{w\cdot \overline{z}}{t} - \frac{|z|^2}{t}}~dz\\
&= \frac{1}{\pi \sqrt{t^{m+2} m!}} \int_0^\infty r^{m+1} e^{-\frac{r^2}{t}} \int_0^{2\pi} e^{i \theta(m + k) + \frac{w r}{t}e^{-i\theta}} d\theta ~dr.
\end{align*}
Using the path $\gamma(\theta) = e^{i\theta}$, $\theta \in [0, 2\pi]$ we have
\begin{align*}
\int_0^{2\pi} e^{i\theta(m+k) + \frac{wr}{t}e^{-i\theta}} d\theta = \frac{1}{i}\int_\gamma z^{m+k-1} e^{\frac{wr}{t} \cdot \frac{1}{t}} dz
\end{align*}
and therefore by the Residue Theorem
\begin{align*}
\int_0^{2\pi} e^{i\theta(m+k) + \frac{wr}{t}e^{-i\theta}} d\theta = 2\pi \operatorname{Res}_{z=0}(z^{m+k-1}e^{\frac{wr}{t} \cdot \frac{1}{z}}).
\end{align*}
For the function $z \mapsto z^{m+k-1}e^{\frac{wr}{t} \cdot \frac{1}{z}}$ we of course have the following Laurent series expansion around $0$:
\begin{align*}
z^{m+k-1}e^{\frac{wr}{t} \cdot \frac{1}{z}} &= z^{m+k-1} \sum_{l=0}^\infty \frac{(wr)^l}{t^l l!}\frac{1}{z^l}\\
&= \sum_{l=0}^\infty \frac{(wr)^l}{t^l l!} \frac{1}{z^{l+1-m-k}}.
\end{align*}
Therefore, we obtain for the residue:
\begin{align*}
\operatorname{Res}_{z=0}(z^{m+k-1}e^{\frac{wr}{t} \cdot \frac{1}{z}}) = \frac{(wr)^{m+k}}{t^{m+k}(m+k)!}.
\end{align*}
This now gives
\begin{align*}
T_{h_k}^t e_m^t(w) &= \frac{2w^{m+k}}{\sqrt{t^{m+2} m!} t^{m+k}(m+k)!} \int_0^\infty r^{2m + k + 1}e^{-\frac{r^2}{t}} dr\\
&= \frac{\Gamma(m + \frac{k}{2} + 1)}{(m+k)! \sqrt{m!} \sqrt{t^{m+k}}} w^{m+k}\\
&= e_{m+k}^t(w) \frac{\Gamma(m + \frac{k}{2} + 1)}{\sqrt{m!(m+k)!}}\\
&= c_m^k e_{m+k}^t(w)
\end{align*}
with
\begin{align*}
c_m^k := \frac{\Gamma(m + \frac{k}{2} + 1)}{\sqrt{m!(m+k)!}}.
\end{align*}
By taking adjoints, we obtain now
\begin{align*}
T_{\overline{h_k}}^t e_m^t = \begin{cases}
c_{m-k}^k e_{m-k}^t, \quad &m \geq k,\\
0, \quad &m < k.
\end{cases}
\end{align*}
In particular, $\mathfrak M(T_{h_k}^t) \in \band_k$ and $\mathfrak M(T_{\overline{h_k}}) \in \band_{-k}$ for any $k \geq 1$. By definition, we also let $h_0 = 1$ such that $T_{h_0}^t = I$ and $\mathfrak M(T_{h_0}^t) \in \band_0$.

We want to end this example by mentioning that the operators $T_{h_k}^t$ and $T_{\overline{h_k}}^t$ are within the class of Toeplitz operators with quasi-homogeneous symbol. While the literature regarding Toeplitz operators on the Fock space with such symbols seems sparse, the investigations of such operators on the Bergman space has a long tradition, going back at least to the paper \cite{Cuckovic_Rao1998}. 
\end{ex}
It is our next goal to characterize the membership of $A \in \mathcal L(F_t^2)$ in $\mathcal T(F_t^2)\cap C_R(F_t^2)$ entirely in terms of its matrix coefficients, i.e., in terms of properties of $\mathfrak M(A)$. For doing so, we recall the main result of \cite{Esmeral_Maximenko2016} which we reformulate in our notation:
\begin{thm}[{\cite[Theorem 1.1]{Esmeral_Maximenko2016}}]
Let $A \in \mathcal L(F_1^2)$ be such that $\mathfrak M(A) \in \band_0$. Then, we have $A \in \mathcal T(F_1^2)$ if and only if the sequence $(a_j)_{j=0}^\infty := \langle Ae_j^1, e_j^1\rangle_1$ is uniformly continuous with respect to the metric
\begin{align*}
\rho(m, n) := |\sqrt{m} - \sqrt{n}|
\end{align*}
on $\mathbb N_0$.
\end{thm}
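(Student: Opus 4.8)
The plan is to prove the two implications separately after reducing everything to a statement about eigenvalue sequences. Identify the radial (equivalently, $\band_0$) operators $\mathcal R$ with their diagonal sequences via the map $\Gamma\colon A \mapsto (a_j)_j$, $a_j = \langle Ae_j^1, e_j^1\rangle_1$, which is an isometric $*$-isomorphism of $\mathcal R$ onto $\ell^\infty(\mathbb N_0)$. The set $d_1$ of bounded sequences that are uniformly continuous with respect to $\rho$ is readily checked to be a unital, conjugation-closed, norm-closed subalgebra of $\ell^\infty$, hence a commutative $C^\ast$-algebra (products stay uniformly continuous because the sequences are bounded). Since $\mathcal T(F_1^2)$ is invariant under $A \mapsto U_\zeta A U_\zeta^{-1}$ — because $U_\zeta T_h^1 U_\zeta^{-1} = T_{R_\zeta h}^1$ is again Toeplitz — and the Bochner integral defining $\hat A(0)$ stays inside a closed algebra, the averaging map $A \mapsto \hat A(0)$ carries $\mathcal T(F_1^2)$ into itself. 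Consequently $\mathcal R \cap \mathcal T(F_1^2)$ is a $C^\ast$-subalgebra, and the theorem is exactly the identity $\Gamma(\mathcal R \cap \mathcal T(F_1^2)) = d_1$.

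Next I would compute the eigenvalues of radial Toeplitz operators. Writing a bounded radial symbol as a function $\alpha$ of $x = |z|^2$, a polar-coordinate computation gives
\[
\langle T_h^1 e_n^1, e_n^1\rangle_1 = \int_0^\infty \alpha(x)\,\frac{x^n e^{-x}}{n!}\,dx =: \gamma_\alpha(n),
\]
so $\gamma_\alpha(n)$ is the average of $\alpha$ against the gamma density $p_n(x) = x^n e^{-x}/n!$ of shape $n+1$, which concentrates near $x = n$ with standard deviation $\sqrt{n+1}$. In the variable $u = \sqrt x$ this averaging window has width $O(1)$ about $u = \sqrt n$, precisely the scale of $\rho$; from this one shows $\gamma_\alpha \in d_1$ for every bounded radial symbol. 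As every $T_h^1 \in \mathcal T(F_1^2)$ and products of radial operators are radial, the $C^\ast$-algebra $D_0$ generated by $\{\gamma_\alpha\}$ satisfies $D_0 \subseteq \Gamma(\mathcal R \cap \mathcal T(F_1^2))$ as well as $D_0 \subseteq d_1$.

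It remains to establish the two genuinely nontrivial inclusions. For $d_1 \subseteq D_0$ (hence ``uniformly continuous $\Rightarrow$ in $\mathcal T$'') I would exploit that in the $u = \sqrt x$ variable the kernels $p_n$ act like an approximate identity at the correct scale, so that the transform $\alpha \mapsto \gamma_\alpha$ reproduces any prescribed bounded $\rho$-uniformly continuous sequence in the limit; alternatively one identifies the maximal ideal space of $d_1$ with the $\rho$-compactification of $\mathbb N_0$ and checks that suitable symbols (e.g.\ $\alpha(x) = e^{-sx}$) produce eigenvalue sequences separating its points, so that Stone--Weierstrass yields density. For the reverse inclusion $\Gamma(\mathcal R \cap \mathcal T(F_1^2)) \subseteq d_1$, I would decompose each generator $T_{h_1}^1\cdots T_{h_m}^1$ into homogeneous Fourier modes of the symbols using $\hat{T_{h}^1}(k) = T_{\hat h(k)}^1$ together with the band grading of Lemma~\ref{lemma2}; radializing selects the terms of total degree zero, each a product of weighted shifts of the type computed in Example~\ref{exampleA}. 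Such a product is diagonal, and its eigenvalue sequence is an explicit product of slowly varying weight sequences, which I would estimate to lie in $d_1$.

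The main obstacle is the two-sided asymptotic analysis of the transform $\alpha \mapsto \gamma_\alpha$, and above all the density statement $d_1 \subseteq D_0$: one must show the range of this transform not only lands in $d_1$ but generates all of it, which hinges on the exact match between the $\sqrt{n}$-width of the gamma kernel and the metric $\rho$. Controlling the radializations of products of \emph{non-radial} Toeplitz operators — i.e.\ showing that the resulting diagonal weight-products stay uniformly continuous in the square-root metric — is the other delicate point. Both ultimately reduce to sharp uniform-continuity estimates for the weight sequences $c_m^k$ of Example~\ref{exampleA} and their analogues.
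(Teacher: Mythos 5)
First, a point of orientation: the paper does not prove this statement at all --- it is imported verbatim as \cite[Theorem 1.1]{Esmeral_Maximenko2016}, so there is no internal proof to compare yours against. Judged on its own terms, your outline reproduces the skeleton of the known argument: pass to the commutative $C^\ast$-algebra of diagonal sequences, compute the eigenvalue transform $\gamma_\alpha(n)=\frac{1}{n!}\int_0^\infty \alpha(x)x^n e^{-x}\,dx$, observe that the Gamma density concentrates at $x=n$ with width $\sqrt{n}$ so that $\gamma_\alpha$ is uniformly continuous for the square-root metric, and then prove a density statement. That is the right strategy, and your identification of where the real difficulty sits (the inclusion $d_1\subseteq D_0$ and the control of radializations of products) is accurate.

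Precisely for that reason, however, what you have written is a plan rather than a proof: the two substantive inclusions are only described in the conditional (``I would exploit\dots'', ``I would estimate\dots''), and they are exactly where all of the work in \cite{Esmeral_Maximenko2016} lies. Beyond that, two of your reductions have concrete holes. (i) The claim that $A\mapsto \hat A(0)$ maps $\mathcal T(F_1^2)$ into itself cannot be justified by saying ``the Bochner integral stays inside a closed algebra'': for a general element of $\mathcal T(F_1^2)$ the map $\zeta\mapsto U_\zeta A U_\zeta^{-1}$ is only SOT-continuous (the paper proves norm continuity of $\zeta \mapsto U_\zeta T_h^t U_\zeta^{-1}$ only for $h\in C_R(\mathbb C)$), so the integral is not a norm-valued Bochner integral, and SOT-limits do not remain in $\mathcal T(F_1^2)$. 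Making this step rigorous needs an extra input, e.g.\ Xia's theorem that $\mathcal T(F_t^2)$ is the norm closure of the single Toeplitz operators \cite{Xia}, or the characterization of $\mathcal T(F_t^2)$ by norm-continuity of the Weyl translations \cite[Theorem 3.1]{Fulsche2020}, combined with the fact that radialization is a norm contraction. (ii) Decomposing a generator $T_{h_1}^1\cdots T_{h_m}^1$ into homogeneous Fourier modes of the symbols does not converge in operator norm for general $h_j\in L^\infty(\mathbb C)$, so ``radializing selects the terms of total degree zero'' is only formal; the workable route is to use that radialization is a contraction to reduce to finite products and then estimate the diagonal entries $\langle T_{h_1}^1\cdots T_{h_m}^1 e_n^1,e_n^1\rangle_1$ directly. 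As it stands the proposal should be regarded as a correct road map to the cited result, not a replacement for citing it.
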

The previous result easily gives rise to the analogous statement for arbitrary $t > 0$:
\begin{cor}\label{Cor_band0}
Let $A \in \mathcal L(F_t^2)$ such that $\mathfrak M(A) \in \band_0$. Then, we have $A \in \mathcal T(F_t^2)$ if and only if the sequence $(a_j)_{j=0}^\infty := (\langle Ae_j^t, e_j^t\rangle_t)_{j=0}^\infty$ is uniformly continuous with respect to the metric $\rho$.
\end{cor}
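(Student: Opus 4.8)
The plan is to reduce the statement for general $t>0$ to the case $t=1$ settled in the preceding theorem by transporting everything along a unitary dilation between the two Fock spaces. First I would introduce the operator $W\colon F_1^2 \to F_t^2$ given by $Wf(z) = f(z/\sqrt t)$. A single change of variables $w = z/\sqrt t$ in the defining Gaussian integral gives
\begin{equation*}
\|Wf\|_t^2 = \int_{\mathbb C} |f(z/\sqrt t)|^2\, d\mu_t(z) = \int_{\mathbb C} |f(w)|^2\, d\mu_1(w) = \|f\|_1^2,
\end{equation*}
so $W$ is unitary onto $F_t^2$, and a direct computation shows $We_n^1 = e_n^t$ for every $n \in \mathbb N_0$; thus $W$ carries the standard basis of $F_1^2$ onto that of $F_t^2$. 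Because $W$ maps basis to basis, conjugation by $W$ leaves matrix representations unchanged: setting $\tilde A := W^\ast A W \in \mathcal L(F_1^2)$ one has $\langle \tilde A e_m^1, e_n^1\rangle_1 = \langle A e_m^t, e_n^t\rangle_t$, i.e.\ $\mathfrak M(\tilde A) = \mathfrak M(A)$. In particular $\mathfrak M(A) \in \band_0$ forces $\mathfrak M(\tilde A) \in \band_0$, and the diagonal sequence $(\langle \tilde A e_j^1, e_j^1\rangle_1)_{j=0}^\infty$ coincides with the given sequence $(a_j)_{j=0}^\infty$.

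The decisive step is to verify that $W$ also intertwines the two Toeplitz algebras. Expanding $\langle W^\ast T_h^t W f, g\rangle_1$ (using that $Wg \in F_t^2$, so the projection $P_t$ may be dropped) and applying the same substitution $w = z/\sqrt t$ yields
\begin{equation*}
\langle W^\ast T_h^t W f, g\rangle_1 = \int_{\mathbb C} h(\sqrt t\, w)\, f(w)\, \overline{g(w)}\, d\mu_1(w) = \langle T_{h(\sqrt t\,\cdot)}^1 f, g\rangle_1 ,
\end{equation*}
that is, $W^\ast T_h^t W = T_{h(\sqrt t\,\cdot)}^1$, the Toeplitz operator on $F_1^2$ whose symbol is the rescaled function $w \mapsto h(\sqrt t\, w)$. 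Since $h \mapsto h(\sqrt t\,\cdot)$ is a bijection of $L^\infty(\mathbb C)$, the set $\{W^\ast T_h^t W : h \in L^\infty(\mathbb C)\}$ equals $\{T_g^1 : g \in L^\infty(\mathbb C)\}$; and as $X \mapsto W^\ast X W$ is a $\ast$-isomorphism of $\mathcal L(F_t^2)$ onto $\mathcal L(F_1^2)$, it maps the $C^\ast$-algebra generated by the former set onto the $C^\ast$-algebra generated by the latter. Hence $W^\ast \mathcal T(F_t^2) W = \mathcal T(F_1^2)$.

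Putting the pieces together finishes the proof: $A \in \mathcal T(F_t^2)$ if and only if $\tilde A = W^\ast A W \in \mathcal T(F_1^2)$, and since $\mathfrak M(\tilde A) \in \band_0$, the Esmeral--Maximenko theorem (for $t=1$) says this holds precisely when $(\langle \tilde A e_j^1, e_j^1\rangle_1)_j = (a_j)_j$ is uniformly continuous with respect to $\rho$. I expect the main obstacle to be the Toeplitz-algebra intertwining of the second paragraph, namely pinning down $W^\ast T_h^t W = T_{h(\sqrt t\,\cdot)}^1$ and arguing cleanly that a $\ast$-isomorphism carrying one generating set onto another carries the generated $C^\ast$-algebras onto each other; everything else (unitarity of $W$, invariance of the matrix and of the diagonal) is a routine change of variables.
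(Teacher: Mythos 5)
Your proof is correct and follows essentially the same route as the paper: the paper conjugates by the dilation $V_{\sqrt t}g(z)=g(\sqrt t\,z)$ (your $W$ is its inverse $V_{1/\sqrt t}$), notes that it carries the standard basis of one space onto that of the other so matrices and diagonal sequences are preserved, and uses the intertwining $V_{1/\sqrt t}T_h^1V_{\sqrt t}=T^t_{V_{1/\sqrt t}h}$ to identify the two Toeplitz algebras before invoking the Esmeral--Maximenko theorem at $t=1$. Your explicit computation of $W^\ast T_h^t W=T^1_{h(\sqrt t\,\cdot)}$ and the remark that a $\ast$-isomorphism mapping generators onto generators maps the generated $C^\ast$-algebras onto each other fills in exactly the step the paper leaves as ``easily verified.''
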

\begin{proof}
One easily verifies that the operator $V_{\sqrt{t}}$, acting as $V_{\sqrt{t}} g(z) = g(\sqrt{t}z)$, maps
\begin{align*}
V_{\sqrt{t}}: F_t^2 \to F_1^2
\end{align*}
isometrically and satisfies both
\begin{align*}
V_{\sqrt{t}} e_k^{t} = e_k^1
\end{align*}
and
\begin{align}\label{Isometry_V}
V_{1/\sqrt{t}} T_h^1 V_{\sqrt{t}} = T_{V_{1/\sqrt{t}} h}^t.
\end{align}
Combining these facts, one easily sees that $A \in \mathcal L(F_1^2)$ with $\mathfrak M(A) \in \band_0$ if and only if $V_{1/\sqrt{t}} A V_{\sqrt{t}} \in \mathcal L(F_t^2)$ with $\mathfrak M(V_{1/\sqrt{t}} A V_{\sqrt{t}}) \in \band_0$ and the sequences
\begin{align*}
(\langle Ae_j^1, e_j^1\rangle_1)_{j=0}^\infty
\end{align*}
and
\begin{align*}
(\langle V_{1/\sqrt{t}} A V_{\sqrt{t}}e_j^t, e_j^t\rangle_t)_{j=0}^\infty
\end{align*}
are indeed identical. Since the generators of $\mathfrak M(\mathcal T(F_1^2)) \cap \band_0$ and $\mathfrak M(\mathcal T(F_t^2)) \cap \band_0$ are in isometric correspondence by Equation \eqref{Isometry_V}, we have $A \in \mathcal T(F_1^2)$ if and only if $V_{1/\sqrt{t}} A V_{\sqrt{t}} \in \mathcal T(F_t^2)$, which finishes the proof.
\end{proof}

\begin{prop}
Let $A \in \mathcal L(F_t^2)$ such that $\mathfrak M(A) \in \band_k$ for some $k \in \mathbb Z$. If $k \geq 1$, then we have $A \in \mathcal T(F_t^2)$ if and only if the sequence $(\langle Ae_{j+k}^t, e_j^t\rangle_t)_{j=0}^\infty$ is uniformly continuous with respect to $\rho$. If $k \leq -1$, then we have $A \in \mathcal T(F_t^2)$ if and only if the sequence $(\langle Ae_j^t, e_{j-k}^t\rangle_t)_{j=0}^\infty$ is uniformly continuous with respect to $\rho$.
\end{prop}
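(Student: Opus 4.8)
The plan is to reduce both cases to the already-settled diagonal case, Corollary \ref{Cor_band0}, by composing $A$ with the explicit Toeplitz operators $T_{h_k}^t$ and $T_{\overline{h_k}}^t$ of Example \ref{exampleA}, which shift the standard basis up resp.\ down by $k$ while scaling by the factors $c_m^k$. I would treat $k \ge 1$ in detail and then obtain $k \le -1$ by passing to adjoints: if $\mathfrak M(A) \in \band_k$ with $k \le -1$, then $\mathfrak M(A^\ast) \in \band_{-k}$ with $-k \ge 1$ by Lemma \ref{lemma2}, part 3), and $A \in \mathcal T(F_t^2)$ iff $A^\ast \in \mathcal T(F_t^2)$ since $\mathcal T(F_t^2)$ is a $C^\ast$-algebra. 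The relevant sequence for $A^\ast$ is then the complex conjugate of the sequence appearing in the statement for $A$, and since $|\overline x - \overline y| = |x-y|$ complex conjugation preserves $\rho$-uniform continuity, so the two statements are equivalent.

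Now fix $k \ge 1$ and set $a_j := \langle A e_{j+k}^t, e_j^t\rangle_t$; note $|a_j| \le \|A\|$. Because $\mathfrak M(A) \in \band_k$, the only surviving matrix entry in the column indexed by $e_{j+k}^t$ is the one at $e_j^t$, so $A e_{j+k}^t = a_j e_j^t$ for all $j \ge 0$ and $A e_m^t = 0$ for $m < k$. Using the explicit actions $T_{h_k}^t e_m^t = c_m^k e_{m+k}^t$ and $T_{\overline{h_k}}^t e_m^t = c_{m-k}^k e_{m-k}^t$ for $m \ge k$ (and $=0$ otherwise), with $c_m^k > 0$, a direct computation yields the two factorizations
\begin{align*}
A T_{h_k}^t e_m^t = c_m^k a_m e_m^t \quad (m \ge 0), \qquad A = D' T_{\overline{h_k}}^t,
\end{align*}
where $D'$ is the diagonal operator with $D' e_j^t = (a_j/c_j^k) e_j^t$. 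Thus $A T_{h_k}^t$ is diagonal with entries $b_m := c_m^k a_m$, and both $T_{h_k}^t$ and $T_{\overline{h_k}}^t = (T_{h_k}^t)^\ast$ are Toeplitz operators, hence lie in $\mathcal T(F_t^2)$.

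The analytic heart of the argument is the asymptotic behaviour of $c_m^k = \Gamma(m + \tfrac{k}{2} + 1)/\sqrt{m!\,(m+k)!}$. The standard ratio asymptotics $\Gamma(x+s)/\Gamma(x) \sim x^s$ give $c_m^k \to 1$ as $m \to \infty$, so both $(c_m^k)$ and $(1/c_m^k)$ are bounded positive sequences converging to $1$. Since every convergent sequence is $\rho$-uniformly continuous and the bounded $\rho$-uniformly continuous sequences are closed under products, multiplication by $(c_m^k)$ or by $(1/c_m^k)$ neither creates nor destroys $\rho$-uniform continuity; that is, $(a_j)$ is $\rho$-uniformly continuous iff $(c_j^k a_j)$ is iff $(a_j/c_j^k)$ is. Both implications then follow at once. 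If $A \in \mathcal T(F_t^2)$, then $A T_{h_k}^t \in \mathcal T(F_t^2)$ and has matrix in $\band_0$, so $(b_m) = (c_m^k a_m)$ is $\rho$-uniformly continuous by Corollary \ref{Cor_band0}, whence $(a_j)$ is. Conversely, if $(a_j)$ is $\rho$-uniformly continuous then so is $(a_j/c_j^k)$, so $D' \in \mathcal T(F_t^2)$ by Corollary \ref{Cor_band0}, and therefore $A = D' T_{\overline{h_k}}^t \in \mathcal T(F_t^2)$.

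The step I expect to be the only genuine obstacle is the asymptotic estimate $c_m^k \to 1$ together with the verification that multiplication by $(c_m^k)^{\pm 1}$ preserves $\rho$-uniform continuity; once these are in place, everything else is a routine unwinding of the basis actions recorded in Example \ref{exampleA} combined with the diagonal case of Corollary \ref{Cor_band0}. A secondary point requiring care is simply the bookkeeping that $T_{h_k}^t$ raises and $T_{\overline{h_k}}^t$ lowers the basis index with reciprocal scaling, which is exactly what makes $A T_{h_k}^t$ land in $\band_0$ and the factorization $A = D' T_{\overline{h_k}}^t$ reconstruct $A$.
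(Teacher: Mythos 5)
Your proof is correct and follows essentially the same route as the paper's: both reduce to the diagonal case of Corollary \ref{Cor_band0} by composing $A$ with the explicit operators $T_{h_k}^t$, $T_{\overline{h_k}}^t$ from Example \ref{exampleA} and exploiting $c_j^k \to 1$. The only cosmetic differences are that the paper absorbs the discrepancy between $T_{h_k}^t$ and the shift $S^k$ into a compact operator (which lies in $\mathcal T(F_t^2)$) and treats $k \le -1$ by rerunning the argument with the conjugate symbol, whereas you divide by the positive sequence $(c_j^k)$ directly (using that bounded $\rho$-uniformly continuous sequences are closed under products with convergent sequences) and handle $k \le -1$ by passing to adjoints; both variants are valid.
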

\begin{proof}
Let us assume $k \geq 1$ and $\mathfrak M(A) \in \band_k$. The other case $k \leq -1$ can be proven identically by simply replacing in the following arguments the operator $T_{h_k}^t$ by $T_{\overline{h_{-k}}}^t$.

We first assume that $(\langle Ae_{j+k}^t, e_j^t\rangle_t)_{j=0}^\infty$ is uniformly continuous with respect to $\rho$. Let
\begin{align*}
b_j := \langle Ae_{j+k}^t, e_j^t\rangle_t
\end{align*}
and define on $F_t^2$ the operator acting diagonally on the standard orthonormal basis as
\begin{align*}
Be_j^t = b_j e_j^t, \quad j \in \mathbb N_0.
\end{align*}
Clearly, $\mathfrak M(B) \in \band_0$. By assumption, the sequence $(b_j)$ is uniformly continuous with respect to $\rho$, hence $\mathfrak M(B) \in \mathfrak M(\mathcal T(F_t^2)) \cap \band_0$ by Corollary \ref{Cor_band0}. Recall that $\mathfrak M(T_{h_k}^t) \in \band_k$ as seen in Example \ref{exampleA}. In particular, $\mathfrak M(T_{h_k}^t) \in \mathfrak M(\mathcal T(F_t^2)) \cap \band_k$. Therefore, $\mathfrak M(T_{h_k}^t B) \in \mathfrak M(\mathcal T(F_t^2)) \cap \band_k$. Recall that $T_{h_k}^t$ acts on the standard basis as
\begin{align*}
T_{h_k}^t e_j^t = c_j^k e_{j+k}^t,
\end{align*}
where
\begin{align*}
c_j^k = \frac{\Gamma(j + \frac{k}{2} + 1)}{\sqrt{j! (j+k)!}}.
\end{align*}
Using Stirling's approximation
\begin{align*}
\Gamma(x) = \sqrt{\frac{2\pi}{x}} \left( \frac{x}{e} \right)^x (1 + \mathcal O(x)) \quad \text{ as } x \to \infty
\end{align*}
one easily sees that for every $k \geq 1$ we have
\begin{align*}
\lim_{j \to \infty} c_j^k = 1.
\end{align*}
In particular, we obtain that $T_{h_k}^t$ is a compact perturbation of $S^k$, where $S$ is the unilateral shift (in the standard basis). Therefore, $A$ acts as
\begin{align*}
Ae_j^t &= b_j e_{j+k}^t = S^k B e_j^t\\
&= T_{h_k}^tBe_j^t + KBe_j^t
\end{align*}
for $K = S^k - T_{h_k}^t$. $\mathfrak M(K)$ is clearly contained in $\band_k$, and since it is compact we also have also $K \in \mathcal T(F_t^2)$ by Proposition \ref{Prop:containment_K}. Therefore, we obtain $A \in \mathcal T(F_t^2)$.

If we assume that $A \in \mathcal T(F_t^2)$ is such that $\mathfrak M(A) \in \band_k$, then we can use essentially the same argument: Upon composing with $T_{\overline{h_{-k}}}^t$, we obtain an operator in $\band_0$ acting, up to a compact perturbation, diagonally with the same coefficients on the standard basis. Then, an application of Corollary \ref{Cor_band0} shows that $c_{j-k}^k \langle Ae_{j+k}^t, e_j^t\rangle_t$ is uniformly continuous with respect to $\rho$. Since $c_{j-k}^k$ converges to $1$ as $j \to \infty$, $(\langle Ae_{j+k}^t, e_j^t\rangle_t)_{j=0}^\infty$ differs by an element in $c_0(\mathbb N_0)$ (the sequences converging to zero) from a uniformly continuous sequence. But every sequence in $c_0(\mathbb N_0)$ is uniformly continuous with respect to $\rho$ (which is easy to verify), hence the statement follows.
\end{proof}
The following result, which is now an easy consequence of the previous proposition, is our second main result and entirely characterizes membership of an operator in $\mathcal T(F_t^2) \cap C_R(F_t^2)$ in terms of its representation with respect to the standard basis of $F_t^2$. Note that, for convenience, we set $e_m^t = 0$ for $m < 0$.
\begin{thm}\label{thm2}
Let $A \in \mathcal L(F_t^2)$. Then, the following are equivalent:
\begin{enumerate}
\item $A \in \mathcal T(F_t^2) \cap C_R(F_t^2)$;
\item $\mathfrak M(A) \in \bdo(\ell^2(\mathbb N_0))$ and $(\langle Ae_j^t, e_{j+k}^t\rangle_t)_{j=0}^\infty$ is uniformly continuous with respect to $\rho$ for every $k \in \mathbb Z$.
\end{enumerate}
\end{thm}
\begin{proof}
This is an easy consequence of the previous proposition and Lemma \ref{lemma_approx_id}.
\end{proof}

\section{Generalizations of the result}\label{sec:3}
\subsection{Separately-rotationally-continuous operators on multi-variable Fock spaces}
Upon considering the Fock space $F_t^2 = F_t^2(\mathbb C^d)$, i.e., the closed subspace of $L^2(\mathbb C^d, \mu_t)$ of holomorphic functions (where now $d\mu_t(z) = \frac{1}{(\pi t)^d}e^{-\frac{|z|^2}{t}}~dz$), the same questions can be asked. Investigating radial operators on multivariate Fock spaces has been (successfully) done in the literature, see for example \cite{Dewage_Olafsson2022} and references therein. There is no problem (up to some more involved multi-index notation) in extending the methods presented above to operators which are rotationally-continuous with respect to separately-radial rotations. By this, we mean the following: For $(\zeta_1, \dots, \zeta_d) \in \mathbb T^d$ we define:
\begin{align*}
    U_{(\zeta_1, \dots, \zeta_d)} g(z_1, \dots, z_d) = g(\zeta_1 z_1, \dots, \zeta_d z_d), \quad g \in F_t^2(\mathbb C^d), ~z_1, \dots, z_d \in \mathbb C.
\end{align*}
Now, we can consider the $C^\ast$-algebra:
\begin{align*}
    C_{SR}(F_t^2(\mathbb C^d)) := \{ A \in \mathcal L(F_t^2(\mathbb C^d)):~\| U_{(\zeta_1, \dots, \zeta_d)} A& U_{(\zeta_1, \dots, \zeta_d)}^\ast - A\| \to 0,\\
    &\zeta_1, \dots, \zeta_d \to 1\}.
\end{align*}
The harmonic analysis of $\mathbb T$ that we employed before can, without introducing any new ideas, be replaced by the harmonic analysis on $\mathbb T^d$. For example, the Fej\'{e}r kernel on $\mathbb T$ is replaced by the $d$-fold tensor product of Fej\'{e}r kernels, acting then on $\mathbb T^d$. The standard basis of $F_t^2(\mathbb C^d)$ is given (using multi-index notation) by the elements:
\begin{align*}
    e_\alpha^t(z) = \sqrt{\frac{1}{t^{|\alpha|} \alpha!}} z^\alpha, ~z \in \mathbb C^d, ~\alpha \in \mathbb N_0^d.
\end{align*}
Considering matrix representations of the operators with respect to this standard basis, the matrix representations are operators on $\ell^2(\mathbb N_0^d)$. Hence, one proves identically:
\begin{thm}
    Let $A \in \mathcal L(F_t^2(\mathbb C^d))$. Then, $A \in C_{SR}(F_t^2(\mathbb C^d))$ if and only if $\mathfrak M(A) \in \bdo(\ell^2(\mathbb N_0^d))$.
\end{thm}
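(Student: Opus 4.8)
The plan is to mirror, essentially verbatim, the chain of arguments that established the single-variable theorem, replacing $\mathbb T$ by $\mathbb T^d$ throughout and scalar exponents by multi-index exponents. First I would fix the notation $\zeta^k := \zeta_1^{k_1}\cdots\zeta_d^{k_d}$ for $\zeta \in \mathbb T^d$ and $k \in \mathbb Z^d$, and for $A \in \mathcal L(F_t^2(\mathbb C^d))$ set $\hat A(k) := \int_{\mathbb T^d} \zeta^k\, U_\zeta A U_\zeta^{-1}\, d\zeta$, the integral again understood in the strong operator topology. The single computation on which everything hinges is the identity $U_{(\zeta_1,\dots,\zeta_d)} e_\alpha^t = \zeta^\alpha e_\alpha^t$, which is immediate from $e_\alpha^t(z) = \sqrt{1/(t^{|\alpha|}\alpha!)}\, z^\alpha$. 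With this in hand, the computation in part 1) of Lemma \ref{lemma2} carries over unchanged and yields
\[
\langle \hat A(k) e_m^t, e_n^t\rangle_t = \begin{cases} \langle A e_m^t, e_n^t\rangle_t, & m - n = k,\\ 0, & \text{otherwise,}\end{cases}
\]
so that $\mathfrak M(\hat A(k))$ is supported on the multi-index off-diagonal $\band_k \subset \mathcal L(\ell^2(\mathbb N_0^d))$; here $m - n = k$ is an equation in $\mathbb Z^d$, and band width is to be measured in the $\ell^\infty$-norm on $\mathbb Z^d$.

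Next I would replace the Fej\'er kernel by its $d$-fold tensor product $\mathbf F_n(\zeta) := \prod_{i=1}^d F_n(\zeta_i)$. Being a product of positive, normalized approximate identities on $C(\mathbb T)$, $\mathbf F_n$ is itself a positive, normalized approximate identity on $C(\mathbb T^d)$, and the proof of Lemma \ref{lemma_approx_id} transcribes word for word: for $A \in C_{R-sep}(F_t^2(\mathbb C^d))$ the map $\zeta \mapsto \|U_\zeta A U_\zeta^{-1} - A\|$ is continuous on $\mathbb T^d$ and vanishes at $\zeta = (1,\dots,1)$, whence $\mathbf F_n *_{\mathbb T^d} A \to A$ in operator norm. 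Expanding $\mathbf F_n *_{\mathbb T^d} A$ against the characters shows that it is a finite linear combination of the $\hat A(k)$ with $k$ ranging over the box $\{k \in \mathbb Z^d : \max_i |k_i| \le n-1\}$, hence a band operator. This gives the forward implication $A \in C_{R-sep}(F_t^2(\mathbb C^d)) \Rightarrow \mathfrak M(A) \in \bdo(\ell^2(\mathbb N_0^d))$.

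For the converse I would reprove the multivariate analogue of part 2) of Lemma \ref{lemma2}: if $\mathfrak M(A) \in \band_k$, then the same expansion as in that proof gives $\langle U_\zeta A U_\zeta^{-1} f, g\rangle_t = \zeta^{-k}\langle Af, g\rangle_t$ for all $f, g$, so that $\|A - U_\zeta A U_\zeta^{-1}\| = |1 - \zeta^{-k}|\,\|A\| \to 0$ as $\zeta \to (1,\dots,1)$. Thus each $\band_k$, and hence every band operator, lies in $C_{R-sep}(F_t^2(\mathbb C^d))$; since this algebra is norm-closed, any $A$ with $\mathfrak M(A) \in \bdo(\ell^2(\mathbb N_0^d))$ — being an operator-norm limit of band operators — lies in $C_{R-sep}(F_t^2(\mathbb C^d))$ as well.

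I do not expect a genuine analytic obstacle here, since no new idea beyond the harmonic analysis of $\mathbb T^d$ is required; the only delicate points are bookkeeping conventions. Specifically, one must fix once and for all that ``band'' on $\ell^2(\mathbb N_0^d)$ refers to off-diagonals indexed by $\mathbb Z^d$ with width measured in a fixed norm, and then verify that the Fej\'er box above is a band of finite width in that norm, which it is in the $\ell^\infty$-norm. One should also record the elementary facts that the tensor Fej\'er kernel remains a positive, normalized approximate identity and that $\zeta \mapsto \|U_\zeta A U_\zeta^{-1} - A\|$ is norm-continuous on all of $\mathbb T^d$ — the latter following from continuity at the identity together with the group law, exactly as in the one-variable setting. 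Once these conventions are pinned down, the argument is a faithful transcription of the $d = 1$ proof.
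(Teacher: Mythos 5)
Your proposal is correct and follows exactly the route the paper intends: the paper itself gives no separate proof here, stating only that one replaces the harmonic analysis of $\mathbb T$ by that of $\mathbb T^d$ (with the $d$-fold tensor product of Fej\'{e}r kernels) and transcribes the one-variable argument verbatim, which is precisely what you do. Your additional care about indexing the off-diagonals by $\mathbb Z^d$ and measuring band width in the $\ell^\infty$-norm is a reasonable way to pin down the bookkeeping the paper leaves implicit.
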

Further, one can also define the Toeplitz algebra over $F_t^2(\mathbb C^d)$, which we denote by $\mathcal T(F_t^2(\mathbb C^d))$. Then, replacing the theorem of Esmeral and Maximenko from \cite{Esmeral_Maximenko2016} by the results in \cite{Dewage_Olafsson2022} for the case of separately-radial symbols, which relates membership in the separately-radial Toeplitz algebra with uniform continuity with respect to the square-root metric on $\mathbb N_0^d$:
\begin{align*}
    \rho_d((m_1, \dots, m_d), (n_1, \dots, n_d)) = |\sqrt{m_1} - \sqrt{n_1}| + \dots + |\sqrt{m_d} - \sqrt{n_d}|.
\end{align*}
This then leads to the following:
\begin{thm}\label{thm:main_result}
    Let $A \in \mathcal L(F_t^2(\mathbb C^d))$. Then, the following are equivalent:
\begin{enumerate}[1)]
    \item $A \in \mathcal T(F_t^2(\mathbb C^d)) \cap C_{SR}(F_t^2(\mathbb C^d))$.
    \item $\mathfrak M(A) \in \bdo(\ell^2(\mathbb N_0^d))$ and $(\langle Ae_\alpha^t, e_{\alpha + \beta}^t\rangle_t)_{\alpha \in \mathbb N_0^d}$ is uniformly continuous with respect to $\rho_d$ for every $\beta \in \mathbb Z^d$.
\end{enumerate}
\end{thm}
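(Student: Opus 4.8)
The plan is to reduce the $d$-variable statement to a \emph{single-band} statement, paralleling the one-variable development of Section \ref{sec:2}, and then to isolate the one genuinely new difficulty, which appears in the off-diagonal-to-diagonal reduction. First I would record that all the $\mathbb T$-harmonic-analysis lemmas transcribe verbatim with $\mathbb T$ replaced by $\mathbb T^d$: writing $f_\beta(\vec\zeta) = \vec\zeta^{\,\beta}$ and $\hat A(\beta) = f_\beta \ast_{\mathbb T^d} A$ for $\beta \in \mathbb Z^d$, the computation of Lemma \ref{lemma2} gives $\mathfrak M(\hat A(\beta)) \in \band_\beta$ (now the multi-indexed off-diagonals of $\mathcal L(\ell^2(\mathbb N_0^d))$), while the tensor-product Fej\'er kernel $F_{n_1} \otimes \cdots \otimes F_{n_d}$ yields $F_{\vec n} \ast_{\mathbb T^d} A \to A$ in operator norm for $A \in C_{R-sep}(F_t^2(\mathbb C^d))$, the approximants being finite weighted sums of the pieces $\hat A(\beta)$. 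Since the equivalence of $C_{R-sep}(F_t^2(\mathbb C^d))$ with $\bdo(\ell^2(\mathbb N_0^d))$ is exactly the preceding theorem, the $\bdo$ clause of condition (2) is interchangeable with $A \in C_{R-sep}(F_t^2(\mathbb C^d))$, and it remains only to match Toeplitz membership with uniform continuity of the off-diagonals.

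For the two directions I would use the Fej\'er machinery symmetrically. For $(1) \Rightarrow (2)$: since $U_{\vec\zeta} T_h^t U_{\vec\zeta}^{-1} = T_{R_{\vec\zeta}h}^t$ with $R_{\vec\zeta} h \in L^\infty(\mathbb C^d)$, the algebra $\mathcal T(F_t^2(\mathbb C^d))$ is invariant under conjugation by $U_{\vec\zeta}$, and as $\hat A(\beta)$ is a Bochner integral of such conjugates it lies in the closed algebra; thus each band piece is in $\mathcal T(F_t^2(\mathbb C^d)) \cap \band_\beta$, and the single-band claim below forces its off-diagonal to be $\rho_d$-uniformly continuous. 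For $(2) \Rightarrow (1)$: the single-band claim puts every $\hat A(\beta) \in \mathcal T(F_t^2(\mathbb C^d))$, hence each finite sum $F_{\vec n} \ast_{\mathbb T^d} A$ is in $\mathcal T(F_t^2(\mathbb C^d))$, and letting $\vec n \to \infty$ gives $A \in \mathcal T(F_t^2(\mathbb C^d))$ by closedness. So the theorem rests on the following single-band claim: \emph{if $\mathfrak M(A) \in \band_\beta$, then $A \in \mathcal T(F_t^2(\mathbb C^d))$ iff its $\beta$-off-diagonal sequence is $\rho_d$-uniformly continuous.}

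To prove this claim I would exploit the tensor factorization $F_t^2(\mathbb C^d) \cong \bigotimes_{j=1}^d F_t^2(\mathbb C)$ to build the multi-variable shifting symbol $h_\beta(z) = \prod_{j}\bigl(z_j^{\beta_j}/|z_j|^{\beta_j}\bigr)$ (with $\overline{z_j}^{\,|\beta_j|}$ in place of $z_j^{\beta_j}$ where $\beta_j < 0$), so that $T_{h_\beta}^t = \bigotimes_j T_{h_{\beta_j}}^t \in \mathcal T(F_t^2(\mathbb C^d)) \cap \band_\beta$ acts by $T_{h_\beta}^t e_\alpha^t = c_\alpha^\beta\, e_{\alpha+\beta}^t$ with $c_\alpha^\beta = \prod_{j} c_{\alpha_j}^{\beta_j}$, by the coordinatewise version of Example \ref{exampleA}. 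Writing $(a_\alpha)$ for the $\band_\beta$-entries of $A$, I would then factor $A = T_{h_\beta}^t D$ with $D$ the diagonal operator $D e_\alpha^t = (a_\alpha / c_\alpha^\beta)\, e_\alpha^t$, and conversely form the diagonal operator $T_{\overline{h_\beta}}^t A$, which has entries $(c_\alpha^\beta)^2 (a_\alpha/c_\alpha^\beta) = c_\alpha^\beta a_\alpha$. Since both $T_{h_\beta}^t$ and $T_{\overline{h_\beta}}^t$ lie in $\mathcal T(F_t^2(\mathbb C^d))$, we get $A \in \mathcal T(F_t^2(\mathbb C^d))$ iff the associated $\band_0$-operator is, which by the $d$-variable Dewage--\'Olafsson criterion holds iff its diagonal is $\rho_d$-uniformly continuous.

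The main obstacle, and the reason one cannot copy the one-variable Proposition verbatim, is that the compact-perturbation shortcut there (namely $T_{h_k}^t = S^k + K$ with $K$ compact, valid because $c_m^k \to 1$) \emph{breaks} in several variables: $c_\alpha^\beta = \prod_j c_{\alpha_j}^{\beta_j}$ does not tend to $1$ as $|\alpha| \to \infty$, since an index can escape to infinity while a coordinate with $\beta_j \neq 0$ stays fixed (e.g.\ $c^{(1,0,\dots,0)}_{(0,n,0,\dots,0)} = c_0^1 = \Gamma(3/2) \neq 1$). The substitute I would prove is the purely analytic lemma that $\alpha \mapsto c_\alpha^\beta$ is bounded, bounded \emph{away from zero}, and $\rho_d$-uniformly continuous, and likewise for $\alpha \mapsto 1/c_\alpha^\beta$: each factor $c_{\cdot}^{\beta_j}$ is a positive sequence converging to $1$ (Stirling), hence has strictly positive infimum and, being convergent, is $\rho$-uniformly continuous; a finite product of bounded, single-coordinate $\rho$-uniformly-continuous (hence $\rho_d$-uniformly-continuous) factors is $\rho_d$-uniformly continuous, and the reciprocal of a positive function bounded below inherits this. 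Consequently multiplication or division by $c_\alpha^\beta$ preserves $\rho_d$-uniform continuity, so the uniform continuity of $(a_\alpha)$, of $(a_\alpha/c_\alpha^\beta)$, and of $(c_\alpha^\beta a_\alpha)$ are all equivalent, closing both implications of the single-band claim and hence the theorem.
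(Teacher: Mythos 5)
Your proposal is correct, and it is essentially the proof the paper intends (tensor Fej\'er kernel to reduce to single bands $\band_\beta$, the Dewage--\'Olafsson criterion for the diagonal case, and the shifting symbols $h_\beta = \prod_j h_{\beta_j}$ to move between $\band_\beta$ and $\band_0$) --- the paper itself offers no written proof here beyond ``this then leads to the following.'' The genuinely valuable part of your write-up is that you correctly identify the one step of the one-variable Proposition that does \emph{not} transcribe: the compact-perturbation argument $T_{h_k}^t = S^k + K$ relies on $c_m^k \to 1$, whereas $c_\alpha^\beta = \prod_j c_{\alpha_j}^{\beta_j}$ has no limit as $|\alpha| \to \infty$ (your example $c^{(1,0,\dots,0)}_{(0,n,0,\dots,0)} = \Gamma(3/2)$ is right), so $S^\beta - T_{h_\beta}^t$ is not compact for $d \geq 2$. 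Your substitute --- that $\alpha \mapsto c_\alpha^\beta$ and its reciprocal are bounded, bounded below, and $\rho_d$-uniformly continuous, so that multiplication by them preserves $\rho_d$-uniform continuity of bounded sequences --- is a correct and sufficient repair, and it simultaneously streamlines the converse direction (no appeal to $c_0$-perturbations is needed). An alternative fix worth noting: since $T_f^t \otimes T_g^t = T_{f \otimes g}^t$ for symbols depending on separate variables, one has $S^\beta = \bigotimes_j S^{\beta_j} = \bigotimes_j (T_{h_{\beta_j}}^t + K_j) \in \bigotimes_j \mathcal T(F_t^2(\mathbb C)) \subseteq \mathcal T(F_t^2(\mathbb C^d))$, which lets one keep the exact shape of the one-variable argument. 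Two small points you should tidy in a final version: for $\beta$ with negative components the sequence in condition (2) is zero-padded on $\{\alpha : \alpha + \beta \notin \mathbb N_0^d\}$, and one should check (as in the one-variable Corollary) that this padding and the index shift do not affect $\rho_d$-uniform continuity; and the sign convention relating $\band_\beta$ to $\langle Ae_\alpha^t, e_{\alpha+\beta}^t\rangle_t$ versus $\langle Ae_{\alpha+\beta}^t, e_\alpha^t\rangle_t$ should be fixed once and used consistently (the paper itself is loose about this).
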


In several variables, there is a different version of this problem which we formulate now. Indeed, considering separately radial rotations is quite restrictive. Indeed, one could instead consider the operators
\begin{align*}
    U_V g(z) = g(Vz), \quad g \in F_t^2(\mathbb C^d), ~z \in \mathbb C^d, ~V \in U(n).
\end{align*}
Here, $U(n)$ denotes the group of unitary $n\times n$-matrices. We now define
\begin{align*}
    C_R(F_t^2(\mathbb C^d)) := \{ A \in \mathcal L(F_t^2(\mathbb C^d)): ~U_V A U_V^\ast \to A, ~V \to I\}.
\end{align*}
Clearly, $C_R(F_t^2(\mathbb C^d)) \subset C_{SR}(F_t^2(\mathbb C^d))$. Hence, there is hope to read off from properties of $\mathfrak M(A)$ if $A \in C_R(F_t^2(\mathbb C^d))$. Nevertheless, the tools we used in the present work clearly hinge on the commutativity of the underlying groups, hence are not suitable to understand properties of this action of $U(n)$. Hence, we pose the following open problem:
\begin{problem}\label{prop:1}
    Characterize membership of $A \in \mathcal L(F_t^2(\mathbb C^d))$ in $C_R(F_t^2(\mathbb C^d))$ in terms of properties of its matrix $\mathfrak M(A)$. Characterize membership of $A \in \mathcal L(F_t^2(\mathbb C^d))$ in $C_R(F_t^2(\mathbb C^d)) \cap \mathcal T(F_t^2(\mathbb C^d))$ in terms of properties of its matrix $\mathfrak M(A)$. 
\end{problem}
Variants of this problem can clearly also be asked for the action of the $k$-quasi-radial subgroup of $U(n)$, see \cite{Dewage_Olafsson2022} for details on this group.

\subsection{The Bergman space on the disc}
We denote by $\mathbb D \subset \mathbb C$ the unit disk in the complex plane. For $\lambda > -1$ we consider the measure
\begin{align*}
    dv_\lambda(z) = c_\lambda (1-|z|^2)^\lambda ~dz
\end{align*}
on $\mathbb D$, where $c_\lambda > 0$ is such that $v_\lambda$ turns into a probability measure. The Bergman space $A_\lambda^2(\mathbb D)$ consists of the subspace of $L^2(\mathbb D, v_\lambda)$ consisting of holomorphic functions. As a closed subspace of $L^2(\mathbb D, v_\lambda)$, $A_\lambda^2(\mathbb D)$ comes endowed with the inner product $\langle \cdot, \cdot\rangle_\lambda$. We refer to the two books by Zhu on Bergman spaces for details \cite{Zhu2005, Zhu2007}. Similarly to the situation of the Fock space, one can consider the operators
\begin{align*}
    \mathcal U_\zeta g(z) = g(\zeta z), \quad g \in A_\lambda^2(\mathbb D), ~\zeta \in \mathbb T, ~z \in \mathbb D.
\end{align*}
They form a group of unitary operators with $\mathcal U_\zeta^\ast = \mathcal U_{\zeta}^{-1}$. One can then investigate the class of operators
\begin{align*}
    C_R(A_\lambda^2(\mathbb D)) := \{ A \in \mathcal L(A_\lambda^2(\mathbb D)): ~\| \mathcal U_\zeta A \mathcal U_\zeta^\ast - A\|_{op} \to 0, ~\zeta \to 1\}.
\end{align*}
Working now with the standard orthonormal basis of $\mathcal A_\lambda^2(\mathbb D)$, given by $f_j^\lambda(z) = \sqrt{\frac{\Gamma(n + \lambda + 2)}{n!\Gamma(\lambda + 2)}} z^n$, one can again identify operators on $A_\lambda^2(\mathbb D)$ with their matrix representation on $\ell^2(\mathbb N_0)$. Exactly as in the one-variable Fock-space case (making suitable but straightforward modifications of the notations involved), one obtains:
\begin{thm}
    Let $A \in \mathcal L(A_\lambda^2(\mathbb D))$. Then, $A \in C_R(A_\lambda^2(\mathbb D))$ if and only if $\mathfrak M(A) \in \bdo(\ell^2(\mathbb N_0))$.
\end{thm}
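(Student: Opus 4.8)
The plan is to transfer the entire argument from the Fock space setting to the Bergman space essentially verbatim, the decisive structural feature — that the standard orthonormal basis consists of monomials, and hence of joint eigenvectors of the rotation operators — being preserved. First I would record the analogue of the fundamental eigenvalue identity. Since $f_m^\lambda(z) = c_m^\lambda z^m$ for the normalizing constant $c_m^\lambda = \sqrt{\Gamma(m+\lambda+2)/(m!\,\Gamma(\lambda+2))}$, we have
\begin{equation*}
\mathcal U_\zeta f_m^\lambda(z) = f_m^\lambda(\zeta z) = c_m^\lambda (\zeta z)^m = \zeta^m f_m^\lambda(z),
\end{equation*}
so $\mathcal U_\zeta f_m^\lambda = \zeta^m f_m^\lambda$. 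Strong continuity of $\zeta \mapsto \mathcal U_\zeta$ then follows by expanding an arbitrary $g = \sum_m g_m f_m^\lambda$ and applying dominated convergence to $\|\mathcal U_\zeta g - g\|_\lambda^2 = \sum_m |g_m|^2 |\zeta^m - 1|^2 \to 0$ as $\zeta \to 1$. Together with the already-noted relations $\mathcal U_\zeta^\ast = \mathcal U_{\overline\zeta} = \mathcal U_\zeta^{-1}$, this guarantees that for $f \in L^1(\mathbb T)$ and $A \in \mathcal L(A_\lambda^2(\mathbb D))$ the convolution $f \ast_{\mathbb T} A := \int_{\mathbb T} f(\zeta)\, \mathcal U_\zeta A \mathcal U_\zeta^{-1}\, d\zeta$ and the Fourier coefficients $\hat A(k) := f_k \ast_{\mathbb T} A$ are well-defined exactly as before.

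Second I would reprove the Fejér approximate-identity lemma and Lemma \ref{lemma2} in this setting; both carry over without change. The analogue of Lemma \ref{lemma_approx_id}, namely $F_n \ast_{\mathbb T} A \to A$ for $A \in C_R(A_\lambda^2(\mathbb D))$, relies only on norm continuity of $\zeta \mapsto \|\mathcal U_\zeta A \mathcal U_\zeta^{-1} - A\|$ together with the positivity and normalization of the Fejér kernel. Defining $\band_k$ with respect to the basis $(f_m^\lambda)$, the computation
\begin{equation*}
\langle \hat A(k) f_m^\lambda, f_n^\lambda\rangle_\lambda = \int_{\mathbb T} \zeta^{\,k-m+n}\, d\zeta \cdot \langle A f_m^\lambda, f_n^\lambda\rangle_\lambda
\end{equation*}
shows $\mathfrak M(\hat A(k)) \in \band_k$, while the converse — that $\mathfrak M(A) \in \band_k$ forces $A \in C_R(A_\lambda^2(\mathbb D))$, via the two-line estimate yielding $\|\mathcal U_\zeta A \mathcal U_\zeta^{-1} - A\| = |1 - \zeta^{-k}|\,\|A\| \to 0$ — is identical. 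These are the only places where the eigenvalue identity is genuinely invoked.

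Finally, the theorem would be assembled precisely as in the Fock space case. For the forward implication, the analogue of Lemma \ref{lemma_approx_id} writes $A$ as a norm limit of the weighted Fourier sums $F_n \ast_{\mathbb T} A = \frac1n \sum_{k=0}^{n-1}\sum_{s=-k}^{k} \hat A(s)$, each a band operator, so $\mathfrak M(A) \in \bdo(\ell^2(\mathbb N_0))$. For the converse, approximating $\mathfrak M(A)$ in operator norm by a band operator $\sum_{k=-N}^{N} \mathfrak M(B_k)$ with $\mathfrak M(B_k) \in \band_k$, each $B_k$ lies in $C_R(A_\lambda^2(\mathbb D))$ by the analogue of Lemma \ref{lemma2}, hence so does their finite sum, and since $C_R(A_\lambda^2(\mathbb D))$ is norm-closed we conclude $A \in C_R(A_\lambda^2(\mathbb D))$. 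There is no substantive obstacle: the sole nontrivial input is that the rotation operators act diagonally on the monomial basis, which is immediate, and the only care needed is notational. In particular the explicit value of $c_m^\lambda$ never enters the argument, so nothing depends on the weight $\lambda$ beyond the fact that the monomials form an orthogonal basis of $A_\lambda^2(\mathbb D)$.
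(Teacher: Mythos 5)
Your proposal is correct and follows exactly the route the paper intends: the paper itself gives no separate proof here, stating only that the argument is verbatim to the Fock space case, and your write-up supplies precisely that transfer, correctly identifying the eigenvalue identity $\mathcal U_\zeta f_m^\lambda = \zeta^m f_m^\lambda$ as the only substantive input. Nothing further is needed.
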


Denote by $P_\lambda$ the orthogonal projection from $L^2(\mathbb D, v_\lambda)$ to $A_\lambda^2(\mathbb D)$. Then, for $h \in L^\infty(\mathbb D)$, the Toeplitz operator $T_h^\lambda$ is defined on $A_\lambda^2(\mathbb D)$ as $T_h^\lambda(g) = P_\lambda(hg)$. By $\mathcal T(A_\lambda^2(\mathbb D))$ we denote the $C^\ast$-algebra generated by all Toeplitz operators on $A_\lambda^2(\mathbb D)$ with symbols in $L^\infty(\mathbb D)$. 

The main results of \cite{Bauer_etal_2014} (see also \cite{Grudsky_etal_2013}) characterizes the radial operators within the Toeplitz algebra.
\begin{thm}[\cite{Bauer_etal_2014, Grudsky_etal_2013}]
    Let $A \in \mathcal L(A_\lambda^2(\mathbb D))$ be radial and set $b_j = \langle A f_j^\lambda, f_j^\lambda\rangle_\lambda$ for $j \in \mathbb N_0$. Then, $A \in \mathcal T(A_\lambda^2(\mathbb D))$ if and only if $(b_j)_{j=0}^\infty$ is uniformly continuous with respect to the logarithmic metric
    \begin{align*}
        d(m, n) = |\ln(m+1) - \ln(n+1)|.
    \end{align*}
\end{thm}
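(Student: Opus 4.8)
The plan is to adapt the strategy behind the Fock-space theorem of Esmeral and Maximenko to the Bergman geometry. The starting point is that radiality of $A$ forces $\mathfrak M(A) \in \band_0$: since $\mathcal U_\zeta f_j^\lambda = \zeta^j f_j^\lambda$, commuting with every $\mathcal U_\zeta$ makes $A$ diagonal, so $A$ is completely encoded by its eigenvalue sequence $(b_j)$. Under $A \mapsto (b_j)$ the radial operators in $\mathcal T(A_\lambda^2(\mathbb D))$ correspond to a commutative $C^\ast$-subalgebra of $\ell^\infty(\mathbb N_0)$, because diagonal operators multiply coordinatewise, take adjoints by complex conjugation, and converge in norm if and only if their sequences converge uniformly. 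The generators of this sequence algebra are the eigenvalue sequences of radial Toeplitz operators: for a radial symbol $h(z) = h(|z|)$ a direct computation in polar coordinates (with the substitution $s = |z|^2$) yields the moment representation
\begin{equation*}
b_j = \gamma_h(j) = \frac{\Gamma(j+\lambda+2)}{j!\,\Gamma(\lambda+2)} \int_0^1 h(\sqrt s)\, s^j (1-s)^\lambda \, ds,
\end{equation*}
normalized so that $h \equiv 1$ gives $\gamma_h \equiv 1$, i.e.\ the identity.

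For the forward direction I would establish the Lipschitz-type estimate $|\gamma_h(m) - \gamma_h(n)| \le C\,\|h\|_\infty\, d(m,n)$. The point is that the densities $d\mu_j(s) = \frac{\Gamma(j+\lambda+2)}{j!\Gamma(\lambda+2)} s^j(1-s)^\lambda\,ds$ are probability measures concentrating near the boundary $s = 1$, and differentiating the representation in a continuous parameter shows that the natural rate of change of $\gamma_h(j)$ is governed by $\frac{d}{dj}\ln(j+1) \sim (j+1)^{-1}$; this is exactly why the \emph{logarithmic} metric, rather than the square-root metric of the Fock case, is the correct one here. Since the bounded $d$-uniformly continuous sequences form a norm-closed $\ast$-subalgebra of $\ell^\infty(\mathbb N_0)$ (uniform continuity survives coordinatewise products, conjugation and uniform limits of bounded sequences), and the generators $\gamma_h$ lie in it, every radial element of $\mathcal T(A_\lambda^2(\mathbb D))$ has a $d$-uniformly continuous diagonal.

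The reverse direction is where I expect the real work. Here one must show that \emph{every} bounded $d$-uniformly continuous sequence is the diagonal of some operator in the radial Toeplitz algebra. The natural route is a Stone--Weierstrass argument after the reparametrization $x = \ln(j+1)$: one identifies the closed algebra generated by the $\gamma_h$ inside the $d$-uniformly continuous sequences, notes that it contains the constants and is closed under the $C^\ast$-operations, and must prove that it separates the points of the appropriate compactification. The hard part is the density/separation statement --- showing the moment sequences $\gamma_h$ are rich enough to approximate an arbitrary $d$-uniformly continuous sequence in supremum norm. This requires a genuine asymptotic (Laplace/Mellin-type) analysis of the concentrating measures $\mu_j$, establishing that on the logarithmic scale the map $h \mapsto \gamma_h$ behaves like an averaging operator whose range is dense in the uniformly continuous functions. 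Once the sequence-level density is in hand, the final step is an operator-level gluing, analogous to Proposition \ref{Prop:containment_K}: a radial operator whose diagonal is $d$-uniformly continuous differs from a suitable radial Toeplitz operator by a diagonal operator with sequence in $c_0(\mathbb N_0)$; such an operator is compact, and since $\mathcal K(A_\lambda^2(\mathbb D)) \subset \mathcal T(A_\lambda^2(\mathbb D))$, membership in the Toeplitz algebra follows.
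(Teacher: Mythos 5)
Note first that the paper does not prove this statement at all: it is imported verbatim from \cite{Bauer_etal_2014, Grudsky_etal_2013}, so there is no internal proof to compare against. Your sketch does follow the broad strategy of those references: radiality forces $\mathfrak M(A)\in\band_0$, the eigenvalue sequence of a radial Toeplitz operator is a Beta-type moment sequence $\gamma_h(j)$ (up to the normalizing factor $\lambda+1$ that is missing from your formula), one proves a Lipschitz estimate for $\gamma_h$ with respect to the logarithmic metric for the forward direction, and a density statement for the reverse direction. That is the correct skeleton.

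There are, however, two genuine gaps. For the forward direction you assert that the radial operators in $\mathcal T(A_\lambda^2(\mathbb D))$ form the $C^\ast$-algebra generated by the sequences $\gamma_h$ of \emph{radial} Toeplitz operators. A radial element of $\mathcal T(A_\lambda^2(\mathbb D))$ is a norm limit of finite sums of products $T_{h_1}^\lambda\cdots T_{h_n}^\lambda$ with arbitrary bounded, non-radial symbols; such products are not diagonal, and the diagonal of their radialization is not a product of $\gamma$'s. Closing this gap is a substantial theorem in its own right: one needs the Su\'arez-type approximation machinery (the $(m,\lambda)$-Berezin transforms, showing every element of the Toeplitz algebra is a norm limit of single Toeplitz operators, and that radialization commutes with this approximation), which is precisely the content of \cite{Bauer_etal_2014}. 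Second, for the reverse direction you correctly identify the density of $\{\gamma_h\}$ in the $d$-uniformly continuous sequences as ``the real work,'' but you do not supply it, and the Stone--Weierstrass route you propose is delicate because separation of points must be checked on the corona of the relevant compactification; the published proofs instead construct approximants directly via convolution/dilation arguments on the logarithmic scale. (Your final ``gluing'' step is also off: uniform density already yields membership in the norm-closed algebra $\mathcal T(A_\lambda^2(\mathbb D))$, and there is no reason the difference from a single radial Toeplitz operator should lie in $c_0(\mathbb N_0)$.) As it stands, the proposal is an accurate road map of the cited literature rather than a proof.
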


Using this result, and replacing the use of the Toeplitz operators $T_{h_k}^t$ (respectively $T_{\overline{h_k}}^t$) by $T_{g_k}^\lambda$ with $g_k(z) = z^k$ (respectively $T_{\overline{g_k}}^\lambda$), one can then prove the following result analogous to Theorem \ref{thm:main_result}. We leave the details of this to the interested reader. Here, we let again $f_{m}^\lambda = 0$ for $m < 0$.
\begin{thm}
    Let $A \in \mathcal L(A_\lambda^2(\mathbb D))$. Then, the following are equivalent:
    \begin{enumerate}[1)]
        \item $A \in \mathcal T(A_\lambda^2(\mathbb D)) \cap C_R(A_\lambda^2(\mathbb D))$.
        \item $\mathfrak M(A) \in \bdo(\ell^2(\mathbb N_0))$ and $(\langle A f_j^\lambda, f_{j+k}^\lambda\rangle_\lambda)_{j=0}^\infty$ is uniformly continuous with respect to the logarithmic metric for every $k \in \mathbb Z$. 
    \end{enumerate}
\end{thm}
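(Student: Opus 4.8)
The plan is to transplant the entire development of Section \ref{sec:2} to the Bergman space, the only genuinely new input being the Bergman-space analogue of Example \ref{exampleA}. Throughout I use that $\mathcal U_\zeta f_j^\lambda = \zeta^j f_j^\lambda$, so that the harmonic-analytic machinery on $\mathbb T$ (the Fourier coefficients $\hat A(k) = f_k \ast_{\mathbb T} A$, the Fej\'er approximate identity of Lemma \ref{lemma_approx_id}, and the band decomposition of Lemma \ref{lemma2}) carries over word for word with $e_m^t$ replaced by $f_m^\lambda$. The theorem preceding this statement already identifies $C_R(A_\lambda^2(\mathbb D))$ with $\bdo(\ell^2(\mathbb N_0))$, so the first half of condition (2) is precisely the assertion $A \in C_R(A_\lambda^2(\mathbb D))$; I also record the analogue of Proposition \ref{Prop:containment_K}, namely $\mathcal K(A_\lambda^2(\mathbb D)) \subset \mathcal T(A_\lambda^2(\mathbb D)) \cap C_R(A_\lambda^2(\mathbb D))$, which holds because $\mathcal K \subset \mathcal T$ on the Bergman space and $\bdo \supset \mathcal K$.

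The heart of the argument is the single-band analogue of the Proposition preceding the Fock corollary. For $g_k(z) = z^k$ with $k \geq 1$, since $z^k f_j^\lambda$ is already holomorphic one computes directly that $T_{g_k}^\lambda f_j^\lambda = d_j^k f_{j+k}^\lambda$ with $d_j^k = \sqrt{\frac{\Gamma(j+\lambda+2)\,(j+k)!}{j!\,\Gamma(j+k+\lambda+2)}}$; the elementary asymptotics $(j+k)!/j! \sim j^k$ and $\Gamma(j+\lambda+2)/\Gamma(j+k+\lambda+2) \sim j^{-k}$ give $d_j^k \to 1$ as $j \to \infty$. Hence $T_{g_k}^\lambda$ raises the index by $k$ and differs from the $k$-th power $S^k$ of the unilateral shift by a weighted shift whose only nonzero diagonal, $(1-d_j^k)_j$, lies in $c_0(\mathbb N_0)$, so $S^k - T_{g_k}^\lambda$ is compact. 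With this in hand the factorization argument of the Fock proof goes through: a single-band operator factors as a power of the unilateral shift composed with the radial diagonal operator carrying its nonzero entries. If that diagonal is uniformly continuous for the logarithmic metric $d$, the radial factor lies in $\mathcal T(A_\lambda^2(\mathbb D))$ by the Bauer--Grudsky theorem, and since the shift power agrees with $T_{g_k}^\lambda$ (resp.\ $T_{\overline{g_k}}^\lambda$ when $k \leq -1$) up to a compact operator, which is again in $\mathcal T(A_\lambda^2(\mathbb D))$, we conclude $A \in \mathcal T(A_\lambda^2(\mathbb D))$. Conversely, composing a band-$k$ operator $A \in \mathcal T(A_\lambda^2(\mathbb D))$ with $T_{\overline{g_k}}^\lambda$ yields a radial operator in $\mathcal T(A_\lambda^2(\mathbb D))$ whose diagonal, up to a $c_0$ perturbation and rescaling by the weights $d_j^k$, reproduces the nonzero diagonal of $A$; Bauer--Grudsky gives uniform continuity of the former, and since $d_j^k \to 1$ the nonzero diagonal of $A$ is itself uniformly continuous for $d$.

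Finally I assemble the equivalence exactly as in the Fock corollary. For (1) $\Rightarrow$ (2): if $A \in \mathcal T(A_\lambda^2(\mathbb D)) \cap C_R(A_\lambda^2(\mathbb D))$ then $\mathfrak M(A) \in \bdo(\ell^2(\mathbb N_0))$; moreover conjugation by $\mathcal U_\zeta$ preserves both $\mathcal T(A_\lambda^2(\mathbb D))$ (it sends a Toeplitz operator to one with a rotated bounded symbol) and $C_R(A_\lambda^2(\mathbb D))$, so each $\hat A(k) = f_k \ast_{\mathbb T} A$, being a Bochner integral of elements of the closed subspace $\mathcal T(A_\lambda^2(\mathbb D)) \cap C_R(A_\lambda^2(\mathbb D))$, again lies in this subspace; since $\mathfrak M(\hat A(k)) \in \band_k$ agrees with $\mathfrak M(A)$ on the relevant off-diagonal, the single-band result forces $(\langle A f_j^\lambda, f_{j+k}^\lambda\rangle_\lambda)_j$ to be uniformly continuous. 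For (2) $\Rightarrow$ (1): $\mathfrak M(A) \in \bdo(\ell^2(\mathbb N_0))$ gives $A \in C_R(A_\lambda^2(\mathbb D))$, each $\hat A(k)$ has $\mathfrak M(\hat A(k)) \in \band_k$ with uniformly continuous off-diagonal by hypothesis, hence $\hat A(k) \in \mathcal T(A_\lambda^2(\mathbb D))$ by the single-band result; the Fej\'er means $F_n \ast_{\mathbb T} A$ are finite linear combinations of the $\hat A(k)$ and therefore lie in $\mathcal T(A_\lambda^2(\mathbb D))$, and they converge to $A$ in operator norm by the approximate-identity lemma, so $A \in \mathcal T(A_\lambda^2(\mathbb D))$ since $\mathcal T(A_\lambda^2(\mathbb D))$ is closed.

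The only real obstacle is the computation and asymptotic analysis underlying the Bergman version of Example \ref{exampleA}: one must verify $d_j^k \to 1$, so that $T_{g_k}^\lambda$ is a compact perturbation of $S^k$, and confirm that the uniform-continuity condition is stable under multiplication by a sequence tending to $1$. The latter reduces to two facts about the logarithmic metric: that every $c_0(\mathbb N_0)$ sequence is uniformly continuous for $d$ (distinct small indices are $d$-separated, while large indices that are $d$-close have both coordinates small), and that the product of a bounded $d$-uniformly continuous sequence with a sequence tending to $1$ differs from the former by a $c_0$ sequence. Everything else is a faithful transcription of Section \ref{sec:2}.
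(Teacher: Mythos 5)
Your proposal is correct and follows exactly the route the paper intends: it transplants the Section \ref{sec:2} machinery (Fej\'er means, band decomposition, the single-band proposition) to $A_\lambda^2(\mathbb D)$, replaces the Esmeral--Maximenko input by the Bauer--Grudsky et al.\ theorem, and substitutes $T_{g_k}^\lambda$ with $g_k(z)=z^k$ for $T_{h_k}^t$, verifying via the weights $d_j^k \to 1$ that $T_{g_k}^\lambda$ is a compact perturbation of $S^k$. This is precisely the argument the paper sketches and leaves to the reader, with the details correctly filled in.
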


We want to mention that a result for separately-rotationally-continuous operators probably holds true on appropriate Bergman spaces of several variables. On the Bergman spaces over $\mathbb D^d = \mathbb D \times \dots \mathbb D$ this seems very plausible and even for the Bergman space over the unit ball $\mathbb B^d$ of $\mathbb C^d$ this seems rather likely. Nevertheless, we defer from formulating results for this setting here, as the characterization of separately-radial operators in the appropriate Toeplitz algebras seemingly has not been worked out in the literature yet. An open problem for the Bergman space of the ball $\mathbb B^d$ analogously to Problem \ref{prop:1} can be formulated in the same manner.

\vspace{0.5cm}

{\small \noindent

\noindent \textbf{Data availability} No datasets were generated or analyzed during the current study.

\noindent \textbf{Conflict of interest} The author declares no competing interests.

\subsection*{Acknowledgements}
We appreciate the reviewer's many comments on the paper, which helped to improve the quality of this work.

\bibliographystyle{amsplain}
\bibliography{References}

\end{document}